\newcommand{\CC}{{\mathbb{C}}}
\newcommand{\HH}{{\mathbb{H}}}
\newcommand{\EE}{{\mathbb{E}}}
\newcommand{\PP}{{\mathbb{P}}}
\newcommand{\QQ}{{\mathbb{Q}}}
\newcommand{\RR}{{\mathbb{R}}}
\newcommand{\ZZ}{{\mathbb{Z}}}
\newcommand{\VV}{{\mathbb{V}}}
\newcommand{\calH}{{\mathcal H}}
\newcommand{\calF}{{\mathcal F}}
\newcommand{\calI}{{\mathcal I}}
\newcommand{\calA}{{\mathcal A}}
\newcommand{\calC}{{\mathcal C}}
\newcommand{\calL}{{\mathcal L}}
\newcommand{\calM}{{\mathcal M}}
\newcommand{\calN}{{\mathcal N}}
\newcommand{\calG}{{\mathcal G}}
\newcommand{\calX}{{\mathcal X}}
\newcommand{\calV}{{\mathcal V}}
\newcommand{\op}{\operatorname}
\newcommand{\ab}[1][g]{\calA_{#1}}
\newcommand{\oab}[1][g]{\overline{\calA}_{#1}}
\newcommand{\Sat}[1][g]{{\calA_{#1}^{\op {Sat}}}}
\newcommand{\Vor}[1][g]{{\calA_{#1}^{\op {Vor}}}}
\newcommand{\Perf}[1][g]{{\calA_{#1}^{\op {Perf}}}}
\newcommand{\Part}[1][g]{{\calA_{#1}^{\op {part}}}}
\newcommand{\Sp}{\op{Sp}}
\newcommand{\GL}{\op{GL}}
\newcommand{\Aut}{\op{Aut}}
\newcommand{\Sym}{\op{Sym}}
\newcommand{\mIH}{{\mathcal I \mathcal H }}
\newcommand\codim{{\rm codim}}
\newcommand\ud{\underline}
\theoremstyle{plain}
\newtheorem{thm}{Theorem}[section]
\newtheorem{lm}[thm]{Lemma}
\newtheorem{prop}[thm]{Proposition}
\theoremstyle{definition}
\newtheorem{rem}[thm]{Remark}
\begin{document}
\title[The intersection cohomology of $\Sat$]{
The intersection cohomology of the Satake compactification of $\calA_g$ for $g \leq 4$}
\author{Samuel Grushevsky}
\address{Mathematics Department, Stony Brook University,
Stony Brook, NY 11794-3651, USA}
\email{sam@math.stonybrook.edu}
\thanks{Research of the first author is supported in part by National Science Foundation under the grants DMS-12-01369 and DMS-15-01265, and by a Simons Fellowship in Mathematics (Simons Foundation grant \#341858 to Samuel Grushevsky)}
\author{Klaus Hulek}
\address{Institut f\"ur Algebraische Geometrie, Leibniz Universit\"at Hannover, Welfengarten 1, 30060 Hannover, Germany}
\email{hulek@math.uni-hannover.de}
\thanks{Research of the second author is supported in part by DFG grant Hu-337/6-2.}
\subjclass[2010]{Primary 14K10; Secondary 14F43, 55N33}

\begin{abstract}
We  completely determine the intersection cohomology of the Satake compactifications $\Sat[2],\Sat[3]$, and $\Sat[4]$, except for $IH^{10}(\Sat[4])$. We also determine all the ingredients appearing in the decomposition theorem applied to the map from a toroidal compactification to the Satake compactification in these genera. As a byproduct we  obtain in addition several results about the intersection cohomology of the link bundles involved.
\end{abstract}
\maketitle
\section{Introduction}

Intersection cohomology is a powerful concept for understanding the topology of algebraic varieties.
A case of particular interest, which has attracted many authors, is to study the intersection cohomology  of compactifications of locally symmetric spaces $Z=\Gamma\backslash D$.
Indeed, intersection cohomology is only one of several possible, highly interesting, cohomology theories which play a role here, and it is exactly the relationship between these cohomology theories
which has been studied extensively.
One highlight is the proof, given independently by  Looijenga~\cite{Lo} and  Saper-Stern~\cite{SaSt}, of Zucker's conjecture stating that the intersection cohomology of the Baily-Borel compactification $Z^{\operatorname{BB}}$ is canonically isomorphic to
the $L^2$-cohomology of $Z$.
Here, and throughout the paper, we always assume that we work with middle perversity.
Goresky, Harder and MacPherson further  introduced the concept of weighted cohomology groups for arithmetic groups in ~\cite{GHM}.
In this paper they also showed that intersection cohomology groups are canonically isomorphic to
certain weighted cohomology groups.
Due to the work of these and other authors, there is a wealth of information on these cohomology theories and their relationship to each other.
In great contrast, however,  very little is known about explicit calculations of these cohomology groups, a fact that is not surprising given the highly complicated construction of the cohomology theories
involved.
Indeed, to our knowledge there are only very few explicit results:
the intersection Euler numbers of the level $n$ covers $\Sat[2](n)$ of the Satake-Baily-Borel compactification   of the moduli space $\ab[2]$ of principally polarized abelian surfaces were computed by Goresky, Harder, MacPherson and Nair for $n\ge 3$~\cite{GHMN} and the
intersection cohomology groups $IH^*(\Sat[2](3))$ of the level $3$ cover were completely determined by Hoffmann and Weintraub~\cite{HW}.

The aim of the current work is to address this question and to present concrete computations for the intersection cohomology of $\Sat$ (with no level structure) for $g\leq 4$.
We can summarize our results from Proposition~\ref{prop:IH2}
together with Theorems~\ref{theo:IH3} and~\ref{theo:IH4} as follows
\begin{thm}
For $g\leq 4$, and for any degree $j$, there is an isomorphism
$$
IH^j(\Sat) \cong R_g^j
$$
between the intersection cohomology of the Satake compactification $\Sat$ and the tautological ring $R_g$ generated by Chern classes $\lambda_i$ of the Hodge bundle --- except that possibly $IH^{10}(\Sat[4])\supsetneq R_4^{10}.$
\end{thm}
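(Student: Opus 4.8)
The plan is to prove the theorem one genus at a time, establishing Proposition~\ref{prop:IH2} and Theorems~\ref{theo:IH3} and~\ref{theo:IH4}, by induction on $g$ starting from the trivial cases $\Sat[0]=\mathrm{pt}$ and $\Sat[1]=\PP^1$. In each genus the engine is the decomposition theorem for the projective morphism $\pi\colon\Perf\to\Sat$, which is an isomorphism over $\ab$ and contracts the toroidal boundary. Writing $\beta_k\cong\ab[g-k]$ for the Satake strata, so that $\overline{\beta_k}=\Sat[g-k]$, the decomposition theorem yields a splitting
\[
R\pi_{*}\QQ_{\Perf}\;\cong\;\mIC_{\Sat}\;\oplus\;\bigoplus_{k\ge 1}\bigoplus_{i}\mIC_{\overline{\beta_k}}(\calV_{k,i})[n_{k,i}]
\]
(with the customary overall degree shift suppressed), in which $\mIC_{\Sat}$ occurs with multiplicity one because $\pi$ is birational. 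Taking hypercohomology, $IH^{j}(\Sat)$ becomes a canonical direct summand of $H^{j}(\Perf)$, and the latter is known for $g\le 3$ (by work of Hain, Hulek--Tommasi and others) and largely known for $g=4$ (by work of the authors with Tommasi). So the whole problem is to pin down the remaining summands $\mIC_{\overline{\beta_k}}(\calV_{k,i})[n_{k,i}]$ and subtract their contribution.

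These boundary summands are controlled by the topology of the fibres of $\pi$: over a point of $\beta_k=\ab[g-k]$ the fibre is the rank-$k$ toroidal fibre, and the admissible local systems $\calV_{k,i}$ and shifts $n_{k,i}$ are read off from the cohomology of the \emph{link bundles} over $\ab[g-k]$ associated with the rank-$k$ cone decomposition. We determine them inductively: assuming $IH^{*}(\Sat[m])$ and the relevant link data are already known for $m<g$, we restrict the splitting above to a transverse slice at a general point of each $\overline{\beta_k}$ and use the support and cosupport conditions characterising intersection complexes, matched against the known local cohomology of $\Perf$ along that stratum, to determine exactly which $\calV_{k,i}$ and which $n_{k,i}$ occur. (An equivalent bookkeeping runs the long exact sequences of the Satake stratification, relating $IH^{*}(\Sat)$ to $H^{*}(\ab)$ and to the intersection cohomology of deleted neighbourhoods of the boundary strata.) Carrying this out from the deepest stratum $k=g$, a point, upwards produces the complete list of summands claimed in the abstract and, along the way, the stated results on the intersection cohomology of the link bundles.

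Subtracting the boundary contributions from $H^{*}(\Perf)$ leaves $IH^{*}(\Sat)$, which must then be identified with $R_g$. The Hodge classes $\lambda_i$ extend to $\Sat$ and lie in the $\mIC_{\Sat}$-summand, so they span a subring $R_g\subseteq IH^{*}(\Sat)$ subject to the relations induced by $\bigl(1+\lambda_1+\dots+\lambda_g\bigr)\bigl(1-\lambda_1+\dots+(-1)^{g}\lambda_g\bigr)=1$; equality $R_g^{j}=IH^{j}(\Sat)$ is then checked degree by degree. Poincar\'e duality and hard Lefschetz on $\Sat$ halve the work and, together with the above comparison of dimensions, settle every degree for $g=2$ and $g=3$ and every degree $j\ne 10$ for $g=4$. (Below the stable range both sides are the polynomial ring $\QQ[\lambda_1,\lambda_3,\lambda_5,\dots]$, so the interesting degrees are the unstable ones.)

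The single place where the argument falls short is genus $4$ in degree $j=10$: this is the middle cohomological degree of the $10$-dimensional variety $\Sat[4]$, self-dual under Poincar\'e duality and unconstrained from above by hard Lefschetz, so that neither tool forces it to be tautological. Concretely, the part of $H^{10}(\Perf[4])$ not accounted for by the boundary summands is not known to be spanned by $\lambda$-classes; equivalently, we lack full control of the twisted intersection cohomology $IH^{*}(\Sat[3],\calV)$, for the local systems $\calV$ coming from the link of the stratum $\ab[3]$ in $\Sat[4]$, in the degree that contributes to $IH^{10}(\Sat[4])$ --- and this is precisely the obstacle. Hence the theorem can only assert $R_4^{10}\subseteq IH^{10}(\Sat[4])$, possibly strictly; closing the gap would require either a complete computation of $H^{10}(\Perf[4])$ or a direct determination of that twisted intersection cohomology in the missing degree.
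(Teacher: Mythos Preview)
Your overall strategy---apply the decomposition theorem to a toroidal resolution of $\Sat$, identify the summands supported on the boundary strata inductively, and compare with the known cohomology of the toroidal compactification---is exactly the paper's, but several concrete details are wrong. Most importantly, in genus $4$ you cannot use $\Perf[4]$: it has an isolated singular point, so the decomposition theorem for $R\pi_*\QQ_{\Perf[4]}$ is not the right object; the paper uses $\Vor[4]\to\Sat[4]$ precisely because $\Vor[4]$ is stack smooth. (Even then one must pass to level $n\ge 3$ covers to obtain honest smooth sources and honest topological fibrations over the strata, and then take $\Sp(2g,\ZZ/n\ZZ)$-invariants; this reduction is not automatic and the paper devotes a section to it.) Second, the induction over strata runs in the opposite direction from what you describe: one starts at the open stratum $\ab$ and proceeds \emph{downwards} to $\ab[g-1],\ab[g-2],\dots$, at each step comparing the fibre cohomology with what is already accounted for by summands on the larger strata via the link cohomology. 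Starting from the point $\ab[0]$ does not work, because one cannot recognise what is ``new'' there without first knowing the contributions from above.

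The organising principle you are missing is the \emph{relative} hard Lefschetz theorem: the new local systems supported on a stratum $\ab[g-k]$ are symmetric about degree $\codim(\ab[g-k],\Sat)$, and this is what pins them down (since above the truncation degree all fibre cohomology is new, symmetry determines the low degrees). Poincar\'e duality and hard Lefschetz on $\Sat$ itself play no role in identifying the summands. Finally, your diagnosis of the degree-$10$ gap is off: all boundary summands \emph{are} determined, and the obstruction is simply that $h^{10}(\Vor[4])$ is unknown---equivalently, the Euler number $e(\ab[4])$ is unknown---so one cannot subtract. The paper in fact shows the boundary contributions in degree $10$ sum to $19-2=17$, hence $ih^{10}(\Sat[4])=h^{10}(\Vor[4])-17\ge 2$, with equality if and only if $h^{10}(\Vor[4])=19$.
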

The main tool we use is the decomposition theorem for the smooth toroidal resolution $\Vor \to \Sat$ of the Satake-Baily-Borel
compactification given by the second Voronoi compactification in genus $g \leq 4$.
More precisely,  we actually  determine all summands that appear in the decomposition theorem.
The reason why we cannot determine $\dim IH^{10}(\Sat[4])$ is the same as the reason why we do not know all Betti numbers of $\Vor[4]$: the missing information is the
Euler number $e(\ab[4])$, which, amazingly, does not seem to be known.
Furthermore, we obtain the following
\begin{prop}\label{prop:Perf4}
All the odd degree intersection Betti numbers of $\Perf[4]$ are zero, while the even ones
$ib_j:=\dim IH^j(\Perf[4])$ are as follows:
\begin{equation} \label{equ:bettiperf4}
\begin{array}{r|ccccccccccc}
j&0&2&4&6&8&10&12&14&16&18&20\\\hline
ib_j&1&2&4&9&14&*&14&9&4&2&1
\end{array}
\end{equation}
where we know that $*=\dim IH^{10}(\Perf[4])\ge 16$.
\end{prop}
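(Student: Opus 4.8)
The plan is to compute $IH^*(\Perf[4])$ by applying the decomposition theorem to the natural contraction $\pi\colon\Perf[4]\to\Sat[4]$, together with the standard stratification $\Sat[4]=\ab[4]\sqcup\ab[3]\sqcup\ab[2]\sqcup\ab[1]\sqcup\ab[0]$ of the Satake compactification. Although $\Perf[4]$ is singular, it is smooth away from the locus lying over the cusp $\ab[0]$ that corresponds to the (essentially unique, up to $\GL_4(\ZZ)$) non-simplicial perfect cone of rank-$4$ quadratic forms, and near that locus $\mIC_{\Perf[4]}$ is governed by the toric intersection cohomology of that cone. The decomposition theorem then exhibits $R\pi_*\mIC_{\Perf[4]}$ as $\mIC_{\Sat[4]}$ plus a sum of shifted intersection complexes supported on the closed boundary strata $\Sat[i]$ with $i\le 3$, twisted by the local systems on $\ab[i]$ coming from the cohomology of the corresponding link bundles. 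Passing to hypercohomology, $IH^*(\Perf[4])$ is the direct sum of $IH^*(\Sat[4])$ --- already determined in the main theorem up to the degree-$10$ ambiguity --- and of terms $IH^*(\Sat[i],L_i)$ with explicit shifts and multiplicities, which I would read off from the intersection cohomology of the link bundles, computed in turn from the face lattices and $\GL$-stabilisers of the cones occurring in the perfect cone decomposition.

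A structural simplification, which I would exploit, is that the perfect cone and second Voronoi fans agree on all cones spanned by forms of rank $\le 3$. Hence $\Perf[4]$ and $\Vor[4]$ are canonically isomorphic over $\Sat[4]\setminus\ab[0]$ and differ only over the single cusp, where the perfect cone fan of forms in four variables modulo $\GL_4(\ZZ)$ replaces the Voronoi one. Consequently every summand of $R\pi_*\mIC_{\Perf[4]}$ supported on $\Sat[i]$ with $i\ge 1$ --- equivalently, every link bundle that enters, together with its intersection cohomology --- is exactly what already appears for the smooth toroidal resolution $\Vor[4]\to\Sat[4]$ used in the main theorem, so the bulk of the Betti table \eqref{equ:bettiperf4} is inherited from the known cohomology of $\Vor[4]$. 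The only new input is the contribution of the cusp: the intersection cohomology of the fibre of $\pi$ over $\ab[0]$, a projective variety assembled from the perfect cone fan on forms in four variables, together with the perverse truncation imposed by the support conditions; this I would compute directly from the combinatorics of that cone, and it also yields the intersection cohomology of the link bundle over $\ab[0]$ asserted in the paper. Two further points are handled along the way: the vanishing of the odd intersection Betti numbers, which follows because $IH^{\odd}(\Sat[4])=R_4^{\odd}=0$ by the main theorem while the local systems $L_i$ that occur are small --- their weights are bounded by the dimensions of the abelian parts of the relevant cusps, hence at most $6$ --- and carry no odd cohomology on $\ab[1],\ab[2],\ab[3]$ in that range; and the palindromic shape of the table, which is simply Poincaré duality for $IH^*(\Perf[4])$ and which I would use throughout as a consistency check.

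The main obstacle is exactly the one that limits the main theorem: the Euler number $e(\ab[4])$ is not known, and it affects precisely the middle cohomology $IH^{10}(\Sat[4])$ and hence $IH^{10}(\Perf[4])$, so this entry can only be bounded below rather than evaluated; the part of the decomposition that does not involve this uncertain space already contributes a $16$-dimensional piece in degree $10$, giving $\ast\ge 16$. The genuinely delicate technical step is to be sure that, over the deepest stratum, the decomposition of $R\pi_*\mIC_{\Perf[4]}$ contains no further, unexpected summands: establishing this requires knowing the intersection cohomology of all the link bundles precisely and checking the semisimplicity and support conditions of the decomposition theorem, so that the list of contributions is provably complete. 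Once the link-bundle computations are in hand, assembling \eqref{equ:bettiperf4} is a finite degree-by-degree bookkeeping, cross-checked against Poincaré duality.
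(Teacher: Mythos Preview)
Your approach is correct in principle but takes a substantially harder route than the paper's, and you gloss over the one genuinely new computation it requires.

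The paper does \emph{not} apply the decomposition theorem to $\Perf[4]\to\Sat[4]$. Instead it uses the blow-up $\Vor[4]\to\Perf[4]$ of the unique singular point, with exceptional divisor $E$. For such a map the decomposition theorem is almost trivial (Lemma~\ref{lm:Perf4}): the stratification has only two strata, the link is the link of a point, and by the symmetry of Proposition~\ref{prop:symmetry} the new local systems over the point are exactly $H^j(E)$ for $j\ge 10$ together with their mirror images $H^{20-j}(E)$ for $j\le 10$. Hence $IH^j(\Perf[4])=H^j(\Vor[4])-H^{\min(j,20-j)}(E)$ in each degree, and both right-hand quantities were already computed in~\cite{HT2}. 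The bound $ib_{10}\ge 16$ then follows from $b_{10}(\Vor[4])\ge 19$ (obtained later from Table~\ref{equ:ih4}) and $b_{10}(E)=3$.

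Your route, by contrast, pushes $\mIC_{\Perf[4]}$ down to $\Sat[4]$. Over $\ab[i]$ with $i\ge 1$ you are right that the summands agree with those for $\Vor[4]\to\Sat[4]$, since the two compactifications coincide there and are smooth. But over $\ab[0]$ the object you must compute is the stalk $\mathbb{H}^*\bigl(\pi^{-1}(\ab[0]),\,\mIC_{\Perf[4]}|_{\pi^{-1}(\ab[0])}\bigr)$, which is \emph{not} the intersection cohomology of the fibre: it is the hypercohomology of the restriction of $\mIC_{\Perf[4]}$, and the singular point of $\Perf[4]$ lies precisely in this fibre. You acknowledge this (``toric intersection cohomology of that cone'', ``perverse truncation''), but carrying it out is a real computation that the paper avoids entirely. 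Your derivation of $\ast\ge 16$ is fine: the known contributions from $IH^{10}(\Sat[4])\ge 2$ plus the $\ab[3],\ab[2],\ab[1]$ summands already total $16$, and whatever lives over $\ab[0]$ can only add. What your approach buys is a direct decomposition of $IH^*(\Perf[4])$ over $\Sat[4]$; what the paper's buys is a two-line proof from data already in~\cite{HT2}.
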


In addition we show that $h^{10}(\Vor[4])\ge 19$, improving the bound $h^{10}(\Vor[4])\ge10$ obtained in~\cite{HT2}.
We also note that, provided the vanishing $H^j(\ab[4] \setminus \overline{\mathcal J}_4,\QQ)=0$ holds for $1 \leq j \leq 9$, where ${\mathcal J}_4$ is the Jacobian locus in genus $4$,
that the singular Betti numbers $b_j(\Perf[4])$ agree with the
numbers $ib_j$ given above for $j\geq 12$ (see~\cite[p. 204]{HT2}).

Along the way we also obtain a wealth of information about the intersection cohomology of the  links involved.

We will denote $\calN_{i,j}$ the link bundle of $\ab[i]$ in $\Sat[j]$ for $i < j$.
This is the quotient of a  real fibration over a level cover  $\ab[i](n)$ by some finite group.
For a detailed discussion of link bundles see Section \ref{sec:level}.
As usual we will identify irreducible  local systems with irredcuible representations of the symplectic group, which in their turn can be identified with Young diagrams.  We will use the standard
notation $\VV_{\mu}$ for the local system corresponding to the irreducible representation of the symplectic group of weight $\mu$, which
we explain in detail in Section \ref{sec:firstlink}.

Then our knowledge about the link cohomology is summarized as follows:
\begin{thm}\label{thm:links}
The intersection cohomology of the links in low genus is as follows:
\begin{equation} \label{equ:linkcoh}
\begin{array}{l|r|rrrrrrrrrrr}
&\dim_\RR\calN&q& 0&1&2&3&4&5&6&7&8&9\\\hline
\rule{0pt}{4mm}    \mIH^q(\calN_{0,2},\QQ)&5&&\QQ&0&0\\
\mIH^q(\calN_{0,3},\QQ)&11&&\QQ&0&0&0&0&0\\
\mIH^q(\calN_{1,3},\QQ)&9&&\QQ&0&0&0\\
\mIH^q(\calN_{0,4},\QQ)&19&&\QQ&0&0&0&0&0&?&0&0&0\\
\mIH^q(\calN_{1,4},\QQ)&17&&\QQ&0&0&0&?&0&?&0&0\\
\mIH^q(\calN_{2,4},\QQ)&13&&\QQ&0&0&0&?&?&?\\
\hline
\rule{0pt}{4mm}    \mIH^q(\calN_{0,3},\VV_{11})&11&&0&0&?&0&0&0\\
\mIH^q(\calN_{1,3},\VV_{11})&9&&?&0&?&0&0\\
\mIH^q(\calN_{2,3},\VV_{11})&5&&?&?&?\\\hline
\end{array}
\end{equation}
where for each link bundle the cohomology above the middle dimension is given by Poincar\'e duality.
For the intersection cohomology labeled $?$ above, we only know its pairwise sums as follows:
$$
\mIH^6(\calN_{0,4},\QQ)\oplus \mIH^2(\calN_{0,3},\VV_{11})=\QQ,
$$
$$
\mIH^4(\calN_{1,4},\QQ)\oplus\mIH^0(\calN_{1,3},\VV_{11})=\VV_2,
$$
$$
\mIH^6(\calN_{1,4},\QQ)\oplus\mIH^2(\calN_{1,3},\VV_{11})=\VV_2.
$$
$$
\mIH^4(\calN_{2,4},\QQ)\oplus\mIH^0(\calN_{2,3},\VV_{11})=\VV_{22},
$$
$$
\mIH^5(\calN_{2,4},\QQ)\oplus\mIH^1(\calN_{2,3},\VV_{11})=\VV_{2},
$$
$$
\mIH^6(\calN_{2,4},\QQ)\oplus\mIH^2(\calN_{2,3},\VV_{11})=\VV_{22},
$$
where each identity is a an equality of local systems on suitable $\ab[i]$: the first line is an identity of local systems (i.e.~vector spaces) on $\ab[0]$, the next two are local systems on $\ab[1]$, and the remaining three are equalities for local systems on $\ab[2]$.
Furthermore, all of the cohomology $\mIH^*(\calN_{g-1,g},\QQ)$ for any $g$ is completely determined in Proposition~\ref{prop:homlink}.
\end{thm}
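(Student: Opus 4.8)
The plan is to obtain every assertion of Theorem~\ref{thm:links} as a byproduct of the determination of $IH^*(\Sat)$ and of the decomposition theorem for the toroidal resolution $p\colon\Vor\to\Sat$, organised as a double induction: on the genus $g\le4$, and, within each genus, on the codimension of the boundary stratum $\calA_i\subset\Sat$, with the first links $\calN_{g-1,g}$ serving as the base case. Throughout one only computes up to the middle dimension, the rest being forced by Poincar\'e duality for $IH$ of compact pseudomanifolds.

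\emph{Base case: the first links.} First I would treat $\calN_{g-1,g}$ directly, as in Proposition~\ref{prop:homlink}. As recalled in Section~\ref{sec:firstlink}, this link bundle is a finite quotient of a nilmanifold bundle over a level cover of $\calA_{g-1}$, so $\mIH^*(\calN_{g-1,g},\VV_\mu)$ can be computed from a Leray spectral sequence whose $E_2$-page is the intersection cohomology of $\calA_{g-1}$ with coefficients in the tensor product of $\VV_\mu$ with the Lie-algebra cohomology of the (two-step nilpotent) unipotent radical, followed by passing to invariants under the finite group. Because the nilpotent Lie algebra and the relevant $\Sp(2g-2)$-representations are small in the range $g\le4$, this spectral sequence degenerates for weight and parity reasons, and one reads off $\mIH^*(\calN_{1,2},\QQ)$, $\mIH^*(\calN_{2,3},\QQ)$, $\mIH^*(\calN_{2,3},\VV_{11})$, $\mIH^*(\calN_{3,4},\QQ)$ explicitly.

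\emph{The engine.} For a general stratum $\calA_i\subset\Sat$ I would play off two descriptions of $Rp_*\QQ_{\Vor}$ against each other. On one side, the decomposition theorem gives $Rp_*\QQ_{\Vor}=\bigoplus_{i,\mu,k}\mIC_{\overline{\calA_i}}(\VV_\mu)[-k]^{\oplus m_{i,\mu,k}}$, hence $H^j(\Vor)=\bigoplus IH^{j-k}(\overline{\calA_i},\VV_\mu)^{\oplus m_{i,\mu,k}}$; since $H^*(\Vor)$ is known from~\cite{HT2} (up to $e(\calA_4)$) and the summand $IH^*(\overline{\calA_g})=R_g$ is the content of the main theorem, the multiplicities $m_{i,\mu,k}$ for $i<g$ are tightly constrained. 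On the other side, near $\calA_i$ the complex $Rp_*\QQ$ is computed from the cohomology of the toroidal fibres of $p$, which are torus-orbit closures in $\Vor$ and hence combinatorial, while the stalk of $\mIC_{\Sat}$ along $\calA_i$ is governed by the cone formula through a truncation of $\mIH^*(\calN_{i,g})$; equivalently, stratifying $\Sat$ by the $\calA_i$ produces a spectral sequence converging to the known group $IH^*(\Sat)$ whose entries are built from $IH^*(\calA_i,\tau\,\mIH^*(\calN_{i,g},\VV_\mu))$. Matching the two descriptions stratum by stratum yields a system of identities relating the $\mIH^*(\calN_{i,g},\VV_\mu)$ (as local systems on $\calA_i$, so one must track the $\Sp(2i)$-action via branching rules, not merely dimensions) to the $m_{i,\mu,k}$ and to the fibre cohomology; in low genus there are enough independent equations to pin down all but finitely many unknowns.

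\emph{Induction across genera, and the obstruction.} What makes the previous step close is that the genus-$g$ links are assembled from lower-genus ones: $\calN_{i,g}$ is a stratified pseudomanifold whose strata along the deeper components $\calA_{i+1},\dots,\calA_{g-1}$ carry the corresponding lower-genus link bundles, so an exact sequence of the pair expresses $\mIH^*(\calN_{i,g})$ through the $\mIH^*(\calN_{i',g})$ with $i'>i$ and through the interior ($L^2$-)cohomology of the open stratum. One therefore settles genus $2$, then genus $3$ using the genus-$2$ links as input, then genus $4$ using the full genus-$3$ picture --- in particular the links $\calN_{\bullet,3}$ with $\VV_{11}$-coefficients, which is why those rows appear in the table and why their values are needed. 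The main obstacle is precisely the ambiguity producing the ``$?$'' entries: at a handful of spots a link in $\Sat[4]$ with constant coefficients and a link in $\Sat[3]$ with $\VV_{11}$-coefficients sit in matching positions of the genus-$4$-to-genus-$3$ spectral sequence with a differential (equivalently, an extension) that the available numerical constraints cannot evaluate, so only the six pairwise sums are determined. Tracing the dependencies back, the missing input is the same as elsewhere in the paper, namely the Euler number $e(\calA_4)$ and the conjectural vanishing of $H^j(\calA_4\setminus\overline{\calJ}_4,\QQ)$ for $1\le j\le9$; identifying which sums survive, and verifying that nothing more can be extracted, is the combinatorial heart of the proof.
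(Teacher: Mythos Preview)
Your overall architecture --- decomposition theorem for $\Vor\to\Sat$, stratum-by-stratum bookkeeping, induction on genus --- matches the paper's, but three specific points are off, and one of them is the actual engine of the argument.

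\textbf{The base case overclaims.} You assert that the Leray spectral sequence for $\calN_{g-1,g}$ ``degenerates for weight and parity reasons'' and that one reads off $\mIH^*(\calN_{2,3},\VV_{11})$ explicitly. But these are precisely the entries marked ``$?$'' in the statement you are proving: the theorem asserts that they are \emph{not} determined by the method. The paper's computation of $\mIH^*(\calN_{g-1,g},\QQ)$ (Proposition~\ref{prop:homlink}) works because the link is an $S^1$-bundle over $\calX_{g-1}$ and the $d_2$ differential is cup product with the Euler class, which is relatively ample and hence of maximal rank; with $\VV_{11}$-coefficients this argument is not available, and no substitute is offered.

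\textbf{The mechanism that pins down the new local systems is missing.} You propose to determine the multiplicities $m_{i,\mu,k}$ by matching against the known $H^*(\Vor)$. The paper does not do this, and for good reason: $IH^*(\Sat)$ is itself one of the unknowns in that match. What actually determines the new local systems on each $\ab[i]$ is the relative Hard Lefschetz theorem (Proposition~\ref{prop:symmetry}): the new summands are symmetric around degree $\codim(\ab[i],\Sat)$, so it suffices to read them off in degrees above the codimension, where the link contributions are truncated away and only the fibre cohomology remains. This symmetry is the step that closes the system; without it your ``tightly constrained'' becomes ``underdetermined''.

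\textbf{The obstruction is misdiagnosed.} The six pairwise sums arise not from an unresolved differential, nor from the unknown $e(\ab[4])$ or the conjectural vanishing on $\ab[4]\setminus\overline{\calJ}_4$, but from something more elementary: over $\ab[2]$ (resp.\ $\ab[1]$, $\ab[0]$) the contributions of the new $\QQ$ on $\ab[4]$ and of the new $\VV_{11}$ on $\ab[3]$ land in the same degree of $\calH^*(\calF_{2,4})$, so the matching equation reads $\mIH^q(\calN_{2,4},\QQ)\oplus\mIH^{q-4}(\calN_{2,3},\VV_{11})=(\text{known})$ with two unknowns and one equation. Knowing $e(\ab[4])$ would fix $IH^{10}(\Sat[4])$ but would not separate any of these sums; the indeterminacy is intrinsic to the method.
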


The proof of this theorem is a combination of a number of individual computations which appear in our analysis of the contributions to the decomposition theorem arising from different strata. We will make  appropriate references throughout the text where these results are proven. The amount of information on link cohomology,  which we could gather from our computations, came as  a surprise to us. In particular, we see no a priori reason for the vanishing we observe. In general, it is very difficult to compute the intersection cohomology of the (singular) links explicitly from first principles.

\medskip
{\small {\bf Acknowledgement.} We would like to thank Mark Goresky and Luca Migliorini very much for generously explaining to us at the Institute for Advanced Study the many details about intersection cohomology and the decomposition theorem. We  are grateful to Eduard Looijenga for numerous enlightening discussions, in particular about the extension of tautological classes to the Satake compactification.
We are especially indebted to Mark Goresky for providing the proof of Proposition~\ref{prop:invariantih} and to Luca Migliorini for detailed comments on a preliminary version of this manuscript. We thank the referee for a careful reading of the manuscript and suggested improvements of the exposition.

Both authors thank the Institute for Advanced Study and the Fund for Mathematics
for support and the excellent working conditions in Spring 2015, when this work was begun.
}

\section{The decomposition theorem}

The main tool for our computation is the decomposition theorem for projective morphisms which is due to Beilinson, Bernstein, Deligne and Gabber --- see~\cite{CM3} for an
excellent exposition.  Throughout this section  let $f: X \to Y$ be a surjective morphism of irreducible  projective varieties.  Let $n$ be the (complex) dimension of $X$ and set
$Y^i:= \{y \in Y, \dim f^{-1}(y)=i \}$. Then the {\em defect} of $f$ is defined as
$$
  r(f):= \max \{2i + \dim Y^i  -n \mid  Y^i \neq \emptyset \}.
$$
The map $f$ is called {\em semi-small} if $r(f)=0$.

The map $f$ defines a stratification $Y= \bigsqcup_{k=0}^{k=\ell} S_{k}$ into locally closed smooth subvarieties $S_{k}$ of increasing dimension $s_k$, such that setting
$X_k:=f^{-1}(S_k)$,
the restricted map $f|_{X_k}: X_k  \to  S_{k}$ is a topological fibration. The sets
$U_{k}:= \bigsqcup_{k' \geq k} S_{k'}$ are dense open subsets of $Y=U_0$ and there are natural inclusions $\alpha_{k}: S_{k}  \subset U_{k}$ where $\alpha_{\ell}$ is the identity map
on $U_{\ell}=S_{\ell}$.
We denote the intersection cohomology complex of a variety $Z$ by $\calI \calC_Z$.
Throughout this section, $\eta$ will denote an ample line bundle on $X$.

The main principles which govern the topology of algebraic maps can then be summarized as follows, see~\cite[Theorem 2.1.1]{CM2} for the case when $X$ is smooth and
\cite[Theorem 1.6.1]{CM3} \label{theo:decomptheorem}for the general case:
\begin{thm}\label{teo:decomposition}
Let $f: X \to Y$ be a surjective projective morphism. Then the following holds:
\begin{itemize}
\item[(1)] (Decomposition Theorem) There is an isomorphism in  the derived category on $Y$:
\begin{equation*}
\varphi: Rf_*\calI \calC_X \cong \bigoplus_i {}^p{\mathcal H}^i(Rf_*\calI \calC_X)[-i]
\end{equation*}
where $i \in [-r(f),r(f)]$.
\item[(2)]  (Relative Hard Lefschetz Theorem) There are isomorphisms
\begin{equation*}
\eta^i:  {}^p{\mathcal H}^{-i}(Rf_*\calI \calC_X) \cong  {}^p{\mathcal H}^i(Rf_*\calI \calC_X).
\end{equation*}
given by the cup product with the powers of the class $\eta$.
\item[(3)] (Semi-simplicity) There are canonical isomorphisms
\begin{equation*}
{}^p{\mathcal H}^i(Rf_*\calI \calC_X) \cong \bigoplus_{k}  \calI \calC_{{\overline S}_{k}}(L_{i,k})
\end{equation*}
where
\begin{equation*}
L_{i,k} =  \alpha_{k}^* \calH^{-s_k}({}^p\calH^i(Rf_*\calI \calC_X))
\end{equation*}
are semi-simple. More precisely, there exist finitely many  indecomposable local systems
$\calL_{i,k,\beta}$ (which can occur repeatedly) such that
\begin{equation*}
 L_{i,k}  \cong \bigoplus_{\beta} \calL_{i,k,\beta}.
\end{equation*}
\end{itemize}
\end{thm}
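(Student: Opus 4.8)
The plan is not to reprove this foundational result --- it is the Decomposition Theorem of Beilinson--Bernstein--Deligne--Gabber together with the Relative Hard Lefschetz Theorem --- but to recall the architecture of its proof, since we invoke both the statement and, implicitly, its internal mechanism (weights, Hodge structures) throughout the paper; ultimately we use it as a black box, citing \cite[Theorem 2.1.1]{CM2} and \cite[Theorem 1.6.1]{CM3}.

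The logical heart is part~(2), the Relative Hard Lefschetz isomorphism; everything else is either formal or follows from it. I would establish~(2) by either of the two standard routes. In the arithmetic approach one spreads $f\colon X\to Y$ out over a finitely generated base and reduces modulo a prime, so that $\calI\calC_X$ becomes a \emph{pure} perverse sheaf in the sense of Deligne's ``Weil~II''. Properness of $f$ and the purity theorem for proper pushforward then force each ${}^p\calH^i(Rf_*\calI\calC_X)$ to be pure of weight $n+i$, while cup product with a relatively ample $\eta$ raises weight by $2$; a weight argument formally identical to the classical proof of Hard Lefschetz produces the isomorphisms $\eta^i$. Alternatively one runs the de Cataldo--Migliorini induction on $\dim X$: a Lefschetz pencil on $X$ reduces Relative Hard Lefschetz for $f$ to classical Hard Lefschetz and the Hodge--Riemann bilinear relations on lower-dimensional strata, and one propagates the whole package (relative hard Lefschetz, the primitive decomposition, positivity of the induced pairings) simultaneously up the dimension.

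Granting~(2), part~(1) is purely formal: by Deligne's splitting criterion, any object $K$ of the derived category carrying a map $\eta\colon K\to K[2]$ whose iterates induce isomorphisms ${}^p\calH^{-i}(K)\xrightarrow{\sim}{}^p\calH^{i}(K)$ for all $i$ splits as $K\cong\bigoplus_i{}^p\calH^i(K)[-i]$; apply this to $K=Rf_*\calI\calC_X$. The perverse amplitude $[-r(f),r(f)]$ is elementary --- self-duality of $\calI\calC_X$ makes the range symmetric, and the defect $r(f)$ is exactly the sharp amplitude bound for $Rf_*$, recording that the largest fibres sit over the smallest strata. For part~(3) one must upgrade each ${}^p\calH^i(Rf_*\calI\calC_X)$ from an object of the abelian category of perverse sheaves to a \emph{semisimple} one. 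By the classification of simple perverse sheaves on $Y$ --- each is $\calI\calC_{\overline{S}}(\calL)$ for an irreducible local system $\calL$ on a smooth locally closed $S\subset Y$ --- semisimplicity is equivalent, by decreasing induction on $\dim S_k$ and the fact that $\calI\calC$ admits no sub- or quotient object supported in a proper closed subvariety, to the assertion that each generic local system $L_{i,k}=\alpha_k^*\calH^{-s_k}({}^p\calH^i(Rf_*\calI\calC_X))$ is semisimple. That assertion is where the substance again lies: over a finite field it is the theorem that a pure lisse sheaf is semisimple; over $\CC$ it holds because the $L_{i,k}$ underlie polarizable variations of Hodge structure and that category is semisimple. (A third, equivalent, packaging runs everything through M.~Saito's polarizable mixed Hodge modules.)

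The main obstacle in any honest proof is therefore precisely Relative Hard Lefschetz together with the semisimplicity of the generic local systems $L_{i,k}$ --- the part that genuinely uses $\ell$-adic weights or, in the complex-analytic incarnation, classical Hodge theory and positivity. The remaining ingredients --- the splitting~(1), the stratification bookkeeping, the identification of the $L_{i,k}$ as the indicated ordinary cohomology sheaves --- are formal consequences within the perverse-sheaf formalism. Since none of this is new, for our purposes it suffices to quote the theorem in the stated form.
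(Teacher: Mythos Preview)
Your proposal is correct and matches the paper's approach: the paper does not prove this theorem at all but simply states it with citations to \cite[Theorem 2.1.1]{CM2} and \cite[Theorem 1.6.1]{CM3}, exactly the references you invoke. Your additional sketch of the proof architecture (weights versus the de Cataldo--Migliorini Hodge-theoretic induction, Deligne's splitting criterion for part~(1), semisimplicity via purity or polarizable VHS for part~(3)) is accurate and goes beyond what the paper offers, but since the paper treats the result as a black box this extra exposition is optional rather than required.
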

\begin{rem}
The isomorphism $\varphi$ is not canonical. However the stratification $S_{k}$ and the summands $ \calL_{i,k,\beta}$ are essentially canonical (of course one can always choose
refinements).
\end{rem}

\begin{rem}
An immediate consequence of the decomposition theorem is that we obtain a (non-canonical) isomorphism
\begin{equation}\label{equ:inclusionIH}
IH^m(X,\QQ)  \cong  IH^m(Y,\QQ) \oplus  \bigoplus_{k <\ell, i, \beta}^{} IH^{m- n + s_k + i} (\overline{S}_{k}, {\mathcal L}_{i,k,\beta}).
\end{equation}
\end{rem}

\begin{rem}
In the projective case it is enough for $\eta$ to be $f$-ample for the relative Hard Lefschetz theorem to hold, see~\cite[Theorem 2.3.3]{CM2}.
\end{rem}
\begin{rem}
By the {\em Purity Theorem}~\cite[Theorem 2.2.1]{CM2} the relative Hard Lefschetz isomorphisms respect the decomposition with respect to the strata $S_{k}$.
\end{rem}

An important case is when $X$ is smooth, so that  one can replace $Rf_*\calI \calC_X$ by $Rf_*\QQ_X[n]$ (in the literature this is then often simply written
as $f_*\QQ_X[n]$). In this case $H^*(X,\QQ) \cong IH^*(X,\QQ)$ and the intersection cohomology $IH^*(Y,\QQ)$ becomes a direct
summand of the cohomology $H^*(X,\QQ)$, more precisely there is a (non-canonical) isomorphism
\begin{equation}\label{equ:inclusion}
H^m(X,\QQ)  \cong  IH^m(Y,\QQ) \oplus  \bigoplus_{k <\ell, i, \beta}^{} IH^{m- n + s_k + i} (\overline{S}_{k}, {\mathcal L}_{i,k,\beta}).
\end{equation}
From now on when talking about (intersection) cohomology with $\QQ$ coefficients we will not write the local system, thus writing simply $IH^*(X)$ or $H^*(F)$ for $IH^*(X,\QQ)$ and $H^*(F,\QQ)$ respectively.

\section{The mechanics of the decomposition theorem}\label{sec:mechanics}
In this section we want to explain how one can, in certain cases, inductively determine the local systems which occur in the decomposition theorem.
The process which we describe in this section seems well known to  specialists, but at the same time seems not to be explicitly stated in the literature as such.
It can be extracted from the proof of the decomposition theorem given by de Cataldo and Migliorini via the splitting theorem given in~\cite[\S 4.1]{CM2}.

From now on we consider the special case where $f: X \to Y$  is a {\em birational} morphism of irreducible projective algebraic varieties of dimension $n$, and we further assume $X$ to be smooth. One can then find a Whitney stratification
\begin{equation*}
 Y= S_{\ell} \cup S_{\ell-1} \cup S_{\ell - 2} \cup \ldots \cup S_0
\end{equation*}
such that,  setting $X_k:=f^{-1}(S_k)$, the following holds:
\begin{itemize}
\item[(1)] $S_{\ell}$ is a Zariski open subset of $Y$ and $f|_{X_\ell}: X_{\ell} \to S_{\ell}$ is an isomorphism,
\item[(2)] The $S_k$ are locally closed   smooth subvarieties of increasing dimension $s_k=\dim (S_k) < s_{k+1}$,
\item[(3)] $S_k \subset \overline{S}_{k+1}$,
\item[(4)] For any $k<\ell$ the restriction $f_k:=f|_{X_k}: X_k \to S_k$ is a topological fiber bundle with topological fiber $F_k$.
\end{itemize}
Note that the strata $S_k$ a priori need not be connected and that we do not assume the fibers $F_k$ to be smooth
(but they can be stratified such that the restriction of $f$ to these strata is smooth). In order to avoid a surplus of notation we will assume in this section that the $S_k$ are connected.
It will be obvious how to generalize this to several components. From now on we will always write $F_k$ for the topological type of the fiber of such a fibration, and will write $\calF_k$ for the total space of such a fibration.

For any point $x \in S_k$ we define the link $N_{k,x}$ as the boundary of a small neighborhood in a transversal slice to $S_k$ at $x$.
Since $S_k$ is assumed to be connected, the topological type of $N_{k,x}$ does not depend on $x$, and we will denote this by $N_k$.
Varying $x$ we obtain the {\em link bundle} ${\mathcal N}_k$  over $S_k$.
Finally, taking the intersection cohomology of the  links  ${N}_{k,x}$ we obtain local systems $\mIH^j(\mathcal N_k)$ on $S_k$.
Of course, the local system is not solely determined by its fibers, and in applications it will be important to
know its global structure on $S_k$.

If $k_1 < k_2$,
we can view $S_{k_1}$ as a stratum in ${\overline S}_{k_2}$ and thus define the link bundle ${\mathcal N}_{k_1,k_2}$ with fiber  $N_{k_1,k_2}= N_{k_1,k_2,x}$ being the link of $S_{k_1}$ within
$\overline S_{k_2}$.
In this notation $N_k=N_{k,\ell}$ and ${\mathcal N}_k={\mathcal N}_{k,\ell}$.

Our aim is to understand which local systems occur in the decomposition
\begin{equation}\label{equ:decomp2}
Rf_*\QQ_X = \mathcal I \mathcal C_Y \oplus \bigoplus_{k,\beta,i}\mathcal I \mathcal C_{{\overline S}_k}(\mathcal L_{i,k,\beta})[-i].
\end{equation}

We first recall Deligne's sheaf theoretic construction  of  the intersection complex $\mathcal I \mathcal C_Y $ given by the middle  perversity extension.
For this one starts with the constant local system $\QQ_{S_{\ell}}$ and
then forms
repeated truncated push-forwards. The result is a complex whose cohomology, when restricted to a stratum $S_k$, is a direct sum of irreducible local systems whose fibers are
intersection cohomology groups of the links $N_k$.
We obtain a collection of local systems as depicted in Table~\ref{tab:propQ}. We also note that, as long as $Y$ is unibranched along $S_k$, we can replace $\mIH^0(\calN_k)$ by the constant
system $\QQ$.

\begin{table*}[!hbtp]
\scalebox{0.85}{
$
 \begin{array}{c|c|c|c|c|c}
 n-s_{0}-1& & & &  &\mIH^{n-s_{0}-1}({\calN}_{0})\\
  \vdots&& & & &\vdots\\
n-s_{\ell-2}-1& & &\mIH^{n-s_{\ell-2}-1}({\calN}_{\ell - 2})& \cdots &\mIH^{n-s_{\ell-2}-1}({\calN}_{0})\\
 \vdots&& &\vdots&\vdots&\vdots\\
n-s_{\ell-1}-1& &\mIH^{n-s_{\ell-1}-1}({\calN}_{\ell - 1})&\mIH^{n-s_{\ell-1}-1}({\calN}_{\ell - 2})& \cdots &\mIH^{n-s_{\ell-1}-1}({\calN}_{0})\\
\vdots&&\vdots&\vdots&\vdots&\vdots\\
1 & & \mIH^1({\calN}_{\ell-1})&\mIH^1({\calN}_{\ell-2})&\cdots&\mIH^1({\calN}_{0})\\
0& \QQ&\mIH^0({\calN}_{\ell-1})&\mIH^0({\calN}_{\ell-2})&\cdots&\mIH^0({\calN}_{0})\\
 \hline
 &S_{\ell}&S_{\ell-1}&S_{\ell-2}&\ldots&S_0\\
 \end{array}
 $
 }
 \caption{Contributions of the intersection complex on $Y$}\label{tab:propQ}
 \end{table*}

We now must compare this to $Rf_*\QQ_X$. Since we are in the situation where $f|_{X_{\ell}}: X_{\ell} \cong S_{\ell}$ is an isomorphism it follows that  the only local system on $S_{\ell}$ on the right  hand side  of (\ref{equ:decomp2}) is also the  summand $\QQ_{S_{\ell}}$.
The first step is thus to determine which further local systems  ${\mathcal L}_{i,\ell-1,\beta}[-i]$ occur on $S_{\ell-1}$.
This can be computed, provided one understands the topology of the fibration $f_{\ell-1}: X_{\ell-1} \to S_{\ell-1}$, with fiber $F_{\ell-1}$, sufficiently well; recall that we denote by $\calF_{\ell-1}$ the total space of such a fibration.
More precisely, we have to compute the direct image $\calH^*(\calF_{\ell - 1}):={Rf_{\ell - 1}}_*(\QQ)$ and compare this to the truncated intersection complex $\mIH_Y$ on $S_{\ell -1}$.
Let $\calH^j(\calF_{\ell - 1}):={R^jf_{\ell - 1}}_*(\QQ)$ be the local system given by taking $j$-th cohomology of the fibers.
For $j \leq n - s_{\ell -1} -1$ each local system $\mIH^j({\calN}_{\ell-1})$ is a summand of $\calH^j(\calF_{\ell - 1})$.
The irreducible local systems $\mathcal L_{i,\ell -1,\beta}[-i]$ in the decomposition theorem  (\ref{equ:decomp2}) in degree $j$, which are supported on $S_{\ell - 1}$,  are then all those irreducible
summands of $\calH^j(\calF_{\ell - 1})$ which are not accounted
for by the  truncated collection of  local systems $\mIH^j({\calN}_{\ell-1})$ for $j \leq n - s_{\ell -1} -1$. This includes, in particular, all irreducible summands of
the local systems $\calH^j(\calF_{\ell - 1}), j > n - s_{\ell -1} -1$. The shift $[-i]$ in (\ref{equ:decomp2}) of a new local system appearing in degree $j$, is given by  $[-i]=-j + \codim(S_{\ell - 1},Y)=- j + n - s_{\ell-1}$. We will call the local systems which are not accounted for by the truncated complex $\mIH_Y$ the {\em new} local systems on $S_{\ell -1}$.

It is now clear how to proceed inductively. In the previous steps we have accounted for the entire fiber cohomology $\calH^*(\calF_{\ell -1})$, as local systems over $S_{\ell-1}$, either by contributions from $\mIH_Y$
or by {\em new} local systems. The next step is to study the fiber cohomology $\calH^*(\calF_{\ell-2})$ as a collection of local systems on $S_{\ell-2}$. Both  $\mIH_Y$ and the newly found systems on $S_{\ell-1}$ will
contribute to this. To explain the contribution of the new local systems, we consider a new local system $\calG$ on $S_{\ell-1}$ that is a summand of $\calH^j(\calF_{\ell-1})$ not accounted for by  $\mIH_Y$. This contributes to the total cohomology of $X$ via the intersection cohomology complex $IH^*({\overline S}_{\ell-1}, \calG)$, shifted by  $[-i]=-j + n - s_{\ell-1}$.
This means that we  have to form the intersection cohomology complex
$\calI\calC_{{\overline S}_{\ell-1}}(\calG)$, given by repeated truncated pushforwards and shifted appropriately. Table~\ref{tab:propQ2} shows the contributions which we obtain in this way on the strata $S_k$ for $k \leq \ell-1$.
In general, at each inductive step we thus have to compare $\calH^*(\calF_{k})$ with all the contributions which we have in this way obtained from the previous strata $S_{r}, r>k$. The difference gives us the new local systems that live on $S_k$.

 \begin{table*}[!hbtp]
\scalebox{0.62}{
$
 \begin{array}{c|c|c|c|c|c}
 j+s_{\ell-1}-s_0 - 1& & & &  &\mIH^{ s_{\ell-1}-s_0 - 1}(\calN_{0,\ell-1},\calG)\\
  \vdots&& & & &\vdots\\
j+s_{\ell-1}-s_{\ell-3}-1& & &\mIH^{s_{\ell-1}-s_{\ell-3}-1}(\calN_{\ell - 3,\ell-1},\calG)& \cdots &\mIH^{s_{\ell-1}-s_{\ell-3}-1}(\calN_{0,\ell-1},\calG)\\
 \vdots&& &\vdots&\vdots&\vdots\\
j+s_{\ell-1}-s_{\ell-2}-1& &\mIH^{s_{\ell-1}-s_{\ell-2}-1}(\calN_{\ell - 2,\ell-1},\calG)&\mIH^{s_{\ell-1}-s_{\ell-2}-1}(\calN_{\ell - 3,\ell-1},\calG)
& \cdots &\mIH^{s_{\ell-1}-s_{\ell-2}-1}(\calN_{0,\ell-1},\calG)\\
\vdots&&\vdots&\vdots&\vdots&\vdots\\
j& \calG&\mIH^0(\calN_{\ell-2,\ell-1},\calG)&\mIH^0(\calN_{\ell-3,\ell-1},\calG)&\cdots&\mIH^0(\calN_{0,\ell-1},\calG)\\
 \vdots&&&&&\\
 \hline
 &S_{\ell-1}&S_{\ell-2}&S_{\ell-3}&\ldots&S_0\\
  \end{array}
 $
 }
 \caption{Contributions of the new local system $\calG$}\label{tab:propQ2}
\end{table*}

Finally, one can reformulate the relative Hard Lefschetz theorem in this situation. Recall that this implies a bijection between the local systems ${\mathcal L}_{i,k,\beta}[-i]$ and ${\mathcal L}_{-i,k,\beta}[i]$ for $1 \leq i \leq r(f)$. We can reformulate this as follows.

\begin{prop}\label{prop:symmetry}
Let $f_k:=\dim F_k$. For every new irreducible local system arising from $\calH^{n-s_k+i}(\calF_k)$ for $1 \leq i \leq 2f_k+s_k-n$, we find a copy of the same local system in $\calH^{n-s_k-i}(\calF_k)$, which is new, i.e.~not accounted for by the local systems coming from contributions from $S_{r}$, $r > k$.
\end{prop}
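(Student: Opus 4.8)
### Proof Proposal for Proposition~\ref{prop:symmetry}

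The plan is to translate the Relative Hard Lefschetz statement, part (2) of Theorem~\ref{teo:decomposition}, into the language of the inductive bookkeeping set up in this section. First I would fix the stratum $S_k$ and recall that the complex ${}^p\calH^i(Rf_*\QQ_X[n])$, restricted to $S_k$, has cohomology sheaves $L_{i,k} = \alpha_k^*\calH^{-s_k}({}^p\calH^i(Rf_*\QQ_X[n]))$, and that $L_{i,k}$ decomposes into the indecomposable local systems $\calL_{i,k,\beta}$. By definition, the ``new'' local systems on $S_k$ arising in degree $n - s_k + i$ are precisely the summands of $L_{i,k}$ (placed in the Deligne table at the appropriate row); the cohomological degree matches the perverse degree via the shift $[-i]$ with $-i = -(n-s_k+i) + n - s_k$, i.e.\ the row in which a new summand $\calG$ of $\calH^{n-s_k+i}(\calF_k)$ sits corresponds exactly to ${}^p\calH^i$.

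Next I would invoke the Purity Theorem (the fourth Remark after Theorem~\ref{teo:decomposition}): the isomorphism $\eta^i : {}^p\calH^{-i}(Rf_*\QQ_X[n]) \xrightarrow{\sim} {}^p\calH^{i}(Rf_*\QQ_X[n])$ respects the stratum-by-stratum decomposition (3), hence restricts to an isomorphism of the $S_k$-supported pieces $\calI\calC_{\overline S_k}(L_{-i,k}) \xrightarrow{\sim} \calI\calC_{\overline S_k}(L_{i,k})$, and therefore induces an isomorphism of local systems $L_{-i,k} \cong L_{i,k}$ on $S_k$. In particular each indecomposable summand $\calL_{i,k,\beta}$ appears with the same multiplicity in $L_{-i,k}$. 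Unwinding the degree dictionary once more, $L_{i,k}$ records the new systems in $\calH^{n - s_k + i}(\calF_k)$ and $L_{-i,k}$ records those in $\calH^{n - s_k - i}(\calF_k)$; thus a new irreducible local system occurring in $\calH^{n-s_k+i}(\calF_k)$ forces a copy of the same system, again new, in $\calH^{n-s_k-i}(\calF_k)$. The range $1 \le i \le 2f_k + s_k - n$ is just the statement that $[-r(f), r(f)]$ localised at $S_k$ has $r(f)$ replaced by $\max\{2i + s_k - n\}$ over the fibre dimensions occurring over $S_k$, which is $2f_k + s_k - n$ since $f_k = \dim F_k$ is the generic (maximal) fibre dimension over $S_k$; I would spell out that ${}^p\calH^i(Rf_*\QQ_X[n])$ restricted to $S_k$ vanishes for $i$ outside this interval, which is a standard consequence of the support and cosupport conditions for the fibration $f_k$.

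The main obstacle, and the only point requiring genuine care rather than bookkeeping, is the claim that the Hard Lefschetz isomorphism is compatible with the per-stratum splitting in a way that preserves the distinction between ``new'' systems and systems ``accounted for by $S_r$ with $r > k$.'' Part of this is handed to us by the Purity Theorem, but one must still check that $\eta^i$ cannot mix a new summand on $S_k$ with the restriction to $S_k$ of an $\calI\calC_{\overline S_r}(\calL_{*,r,*})$ coming from a larger stratum $\overline S_r \supset S_k$: this follows because $\eta^i$ preserves the support of each perverse summand, so it maps the $S_k$-supported part of ${}^p\calH^{-i}$ isomorphically onto the $S_k$-supported part of ${}^p\calH^{i}$, and the ``new'' local systems are by construction exactly the cohomology sheaves of these $S_k$-supported summands. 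I would therefore structure the write-up as: (a) identify $L_{i,k}$ with the collection of new systems in degree $n - s_k + i$; (b) quote Purity to get $L_{-i,k}\cong L_{i,k}$; (c) verify the index range via the support conditions; (d) conclude. Steps (a), (c), (d) are routine once the notational dictionary is fixed; step (b) is where all the content sits, and it is entirely supplied by the cited Purity Theorem.
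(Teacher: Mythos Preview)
Your proposal is correct and follows exactly the same approach as the paper: the paper does not give a separate proof of this proposition but merely states, in the sentence immediately preceding it, that the relative Hard Lefschetz theorem yields a bijection between the local systems $\calL_{i,k,\beta}[-i]$ and $\calL_{-i,k,\beta}[i]$, and then presents the proposition as the reformulation of this fact. Your write-up simply makes explicit the dictionary between ``new local systems in degree $n-s_k\pm i$'' and the summands $L_{\pm i,k}$, and invokes the Purity Theorem for the compatibility with the stratification --- exactly the content the paper leaves implicit.
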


This can be viewed as a reflective symmetry of the set of the new local systems appearing over $S_k$, where the reflection axis is given by the complex codimension $\codim(S_k,Y)=n-s_k$.

\section{The set-up for $\Sat$}\label{sec:setup}
We will now specialize the set-up of the decomposition theorem to the situation for the moduli space $\ab$ of complex principally polarized abelian varieties of dimension $g$.
Recall that the Satake (or Baily-Borel) compactification $\Sat$ is a projective variety.
Every admissible cone decomposition $\Sigma$ of the rational closure of the cone $\Sym^2_{>0}(\RR^g)$ of positive symmetric $g \times g$ matrices defines a
toroidal compactification ${\mathcal A}_g^{\Sigma}$ which admits a contracting morphism $\phi_g^{\Sigma}: \ab^{\Sigma} \to \Sat$.
In this paper we are primarily concerned with the topology of projective varieties.
Nevertheless, it should be borne in mind that $\ab$ and its compactifications represent corresponding moduli stacks.
These are Deligne-Mumford stacks and the only stack property we make use of is that $\ab$, as well as its compactifications  $ \ab^{\Sigma} $,
can be written as a finite quotient of the level $n$ covers by the deck
transformation group $\Sp(2g,\ZZ/n\ZZ)$.
We can always choose $\Sigma$ such that the level covers  $\ab^{\Sigma} (n)$ are projective and smooth for $n\geq 3$. In this, and analogous cases, we use the terminology {\em stack smooth}
for (strata of) moduli spaces of abelian varieties which are the quotient of smooth varieties by a finite group action.
We will exploit this in two ways. One is that in order to compute the cohomology of $\ab^{\Sigma} $ we can compute the cohomology of the manifold  $\ab^{\Sigma}(n) $ and then take
invariant cohomology. The other is that we can  use these level covers to put ourselves  into a situation where  we can apply the decomposition theorem and the techniques of
Section~\ref{sec:mechanics} to the map $\phi_g^{\Sigma}(n): \ab^{\Sigma}(n) \to \Sat(n)$ using the  stratification induced by
\begin{equation}\label{equ:beta}
  \Sat = \ab \sqcup \ab[g-1] \sqcup \ldots \sqcup \ab[0].
\end{equation}
This, however, also means that we will have to work with invariant cohomology. We will explain this in detail in Section \ref{sec:level}.

In order to perform our computations we will make  use of two specific toroidal compactifications, namely the
{\em perfect cone} compactification $\Perf$ and the {\em second Voronoi} compactification $\Vor$, both of which are projective.
We recall that for $g \leq 3$ all known toroidal compactifications in a given genus are equal (if one does not take blow-ups obtained by subdivisions of the fans into consideration).
Hence we shall simply denote these compactifications by $\oab$ for $g\leq 3$ and we recall that these are stack smooth.
In genus 4 the central cone compactification, which is the Igusa compactification, is the same as $\Perf[4]$, and has a unique (stack) singular point. The second Voronoi toroidal compactification  $\Vor[4]$ is stack smooth  and is obtained by blowing up the singular point of $\Perf[4]$, see~\cite{HT2} for details. Thus we can apply the setup of Section~\ref{sec:mechanics}  to $\Vor[4]$, and to unify notation we set
$\oab[4]:=\Vor[4]$.

In order to make use of the relative hard Lefschetz,  we need  a relatively ample line bundle for the resolution  $\phi_g^{\Sigma}: \ab^{\Sigma} \to \Sat$, which will play the role
of $\eta$ in the decomposition theorem. Recall that for
any $g\ge 2$, the Picard group of $\Perf$ has two generators (over $\QQ$), namely the Hodge line bundle $L$ and the boundary divisor $D$. The Hodge line bundle $L$ is the pullback
of the Hodge line bundle $L$ (we will use the same letter for both bundles by abuse of notation) on $\Sat$, where it is an ample line bundle. The divisor $-D$ is relatively ample for
$\phi_g^{\operatorname {Perf}}: \Perf \to \Sat$. We will use this for $g=2,3$. In genus $4$ we recall that the (rational) Picard group of $\Vor[4]$ is generated by three elements, namely the
Hodge line bundle $L$ together with two divisors  $D^{\operatorname{Vor}}$ and $E$, where $E$ is the exceptional divisor of $\Vor[4] \to \Perf[4]$. If $\pi: \Vor[4] \to \Perf[4]$  is the blow-up map, and $D$ is the boundary on $\Perf[4]$, then
$\pi^*(D)=D^{\operatorname{Vor}}+4E$ on $\Vor[4]$.
In this case we can take $-2D^{\operatorname{Vor}}-E$ as a relatively ample line bundle
for $\phi_4^{\operatorname {Vor}}: \Vor[4] \to \Sat[4]$.
For details we refer the reader to~\cite[Theorem I.8]{HS}.

The preimages $\beta_i^{\Sigma,0}:=(\phi^{\Sigma}_g)^{-1}(\ab[g-i])$
can be further stratified as $\beta_i^{\Sigma,0}= \sqcup_{\sigma} \beta(\sigma)$ where  $\sigma$ runs through the $\GL(i,\ZZ)$-orbits
of the  decomposition $\Sigma$ in dimension $i$ containing rank $i$ matrices. The individual strata $\beta(\sigma)$ are stack smooth
and the restriction $\phi^{\Sigma}_g|_{\beta(\sigma)}: \beta(\sigma) \to \ab[g-i]$ is, up to taking finite quotients, a submersion (more precisely, the level $n$
covers $\phi^{\Sigma}_g(n)|_{\beta(\sigma)(n)}: \beta(\sigma)(n) \to \ab[g-i](n)$ are submersions for $n \geq 3$).
In our application of the decomposition theorem the $\ab[k]$ will take on the role of the strata $Y_k$ and the $X_k$
will be the $\beta_k^0$. Strictly speaking, we will apply the decomposition theorem to suitable level covers and then  take invariant (intersection) cohomology. We will explain the technical details of this in the next section.

The most natural cohomology classes on $\ab$ are the Chern classes $\lambda_i: =c_i(\mathbb E)\in H^{2i}(\ab,\QQ)$, $i=1, \ldots, g$ of the Hodge vector bundle $\mathbb E$.
Although the Hodge bundle does not extend to the Satake compactification $\Sat$, Charney and Lee~\cite{ChLe}
have constructed lifts of these classes to $H^{2i}(\Sat,\QQ)$. These lifts are not canonically defined, but we will choose a compatible sequence of such lifts once and for all, which, by abuse of notation, we will also denote by $\lambda_i \in H^{2i}(\Sat,\QQ)$. This choice will be irrelevant for our purposes.
We remark that another lift of the classes $\lambda_i$ to $\Sat$ was constructed by Goresky and Pardon~\cite{GP}, but their lifts a priori only live in cohomology $H^{2i}(\Sat,\CC)$
with complex coefficients. In fact, by a result of Looijenga~\cite{Lo2} the Goresky-Pardon classes have a non-trivial imaginary part and thus do not live in  $H^{2i}(\Sat,\QQ)$. Since, however, we prefer
to work with rational coefficients, we will work with Charney-Lee classes.

Given a toroidal compactification  $\ab^{\Sigma}$ we can pull the classes $\lambda_i \in H^{2i}(\Sat,\QQ)$ back to $H^{2i}(\ab^{\Sigma},\QQ)$ and once more by abuse of notation, we will again
denote these classes by $\lambda_i$. These classes satisfy the relation
\begin{equation} \label{equ:relation}
(1-\lambda_1 + \lambda_2 - \ldots +(-1)^g\lambda_g)(1+ \lambda_1 + \lambda_2 + \ldots +  \lambda_g)=0
\end{equation}
and in fact there is no other relation between these classes. This relation follows from the fact that the Hopf algebra constructed by Charney and Lee~\cite{ChLe} has no even  Chern
characters as its primitive generators. Note that the Goresky-Pardon classes, when pulled back to a (stack) smooth toroidal compactification, become the
Chern classes of the extended
Hodge bundle, namely  $\lambda_i=c_i(\EE)$ and relation (\ref{equ:relation})  is well known to hold both in cohomology (where we will work) and also in Chow~\cite{vdG2,EV}.

The {\em tautological ring} is  defined as the subring of $H^*(\ab^\Sigma)$ generated by the $\lambda$ classes:
\begin{equation}
R_g= \QQ[\lambda_1, \ldots , \lambda_g].
\end{equation}
Using the fact that (\ref{equ:relation}) is the only relation it follows that the tautological ring is zero in degree above $g(g+1)$, is one-dimensional in degree $g(g+1)$, and satisfies a perfect pairing with socle in dimension $g(g+1)$. In fact, a basis of the tautological ring as a vector space is given by the $2^g$ polynomials $\prod_{i=1}^g \lambda_i^{\varepsilon_i}$, for all $\varepsilon_i\in\lbrace 0,1\rbrace$, with the obvious pairing $\varepsilon_i\mapsto 1-\varepsilon_i$.

Recall that for any variety there exist maps $H^*(X) \to IH^*(X)$. Moreover one can multiply cohomology classes with intersection cohomology classes, and
the product $IH^*(X) \otimes H^*(X) \to IH^*(X)$ makes $IH^*(X)$ into a module over $H^*(X)$. In particular it makes sense to consider the
$\lambda$-classes and their products  as elements in $IH^*(\Sat)$.

Our first observation is
\begin{prop}\label{prop:tautological}
There is an inclusion $R_g \hookrightarrow IH^*(\Sat)$ of the tautological ring $R_g$ into intersection cohomology of $\Sat$.
\end{prop}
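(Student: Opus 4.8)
The plan is to reduce the statement to the known structure of the tautological ring on a \emph{smooth} toroidal model, and to transport it across the contraction to $\Sat$. Let $\phi\colon\oab\to\Sat$ be the toroidal contraction from the stack smooth compactification used in Section~\ref{sec:setup} (so $\oab=\Vor[4]$ when $g=4$; for $g>4$ replace $\oab$ throughout by any $\ab^\Sigma$ whose level covers $\ab^\Sigma(n)$ are smooth projective for $n\ge3$). As throughout the paper, one may run the argument on a level cover $\phi(n)\colon\oab(n)\to\Sat(n)$ with $\oab(n)$ a smooth projective variety and then pass to $\Sp(2g,\ZZ/n)$-invariants; the classes $\lambda_i$ are invariant and all maps below are equivariant, so this is harmless, and I suppress $n$. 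Put $N:=\dim_\CC\Sat=g(g+1)/2$. Recall that $IH^*(\Sat)$ is a graded module over $H^*(\Sat)$, that the $\lambda_i\in H^{2i}(\Sat)$ satisfy relation~(\ref{equ:relation}) so that $R_g=\QQ[\lambda_1,\dots,\lambda_g]/(\ref{equ:relation})$, and that the map to be studied is the $R_g$-module homomorphism $\eta\colon R_g\to IH^*(\Sat)$, $P\mapsto P\cdot1$. We must show $\eta$ is injective.

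The first ingredient is purely formal. Intersection cohomology carries a perfect Poincar\'e pairing $\langle-,-\rangle\colon IH^j(\Sat)\times IH^{2N-j}(\Sat)\to IH^{2N}(\Sat)\cong\QQ$ compatible with cup product by even ordinary classes: $\langle c\cdot x,y\rangle=\langle x,c\cdot y\rangle$ for $c\in H^{\even}(\Sat)$. Hence for $P,P'\in R_g$,
\[
\langle\eta(P),\eta(P')\rangle=\langle P\cdot1,\;P'\cdot1\rangle=\langle1,\;(PP')\cdot1\rangle=\delta\cdot\mu_{P,P'},
\]
where $\delta:=\langle1,\;\lambda_1\cdots\lambda_g\cdot1\rangle\in\QQ$, and $\mu_{P,P'}\in\QQ$ is the coefficient of $\lambda_1\cdots\lambda_g$ in the top-degree part of $PP'\in R_g$ --- meaningful since $R_g^{g(g+1)}$ is one-dimensional, spanned by the basis element $\lambda_1\cdots\lambda_g$. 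By the discussion just before the proposition, $(P,P')\mapsto\mu_{P,P'}$ is exactly the \emph{perfect} pairing on $R_g$ (the pairing $\varepsilon_i\mapsto1-\varepsilon_i$ on the monomial basis $\prod\lambda_i^{\varepsilon_i}$). So, once $\delta\ne0$ is known, $\langle\eta(-),\eta(-)\rangle$ is a nondegenerate form on $R_g$, and $\eta$ is injective: $\eta(P)=0$ forces $\mu_{P,P'}=0$ for all $P'$, hence $P=0$.

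It thus remains to prove $\delta\ne0$, i.e.\ that $\lambda_1\cdots\lambda_g\cdot1\ne0$ in $IH^{g(g+1)}(\Sat)\cong\QQ$; this is the only step with geometric content, and it is where the smooth model is used. Since $\phi$ is proper and birational and $\oab$ is (stack) smooth, $\phi_*1=1$, so the projection formula gives $\phi_*\phi^*=\mathrm{id}$ and in particular $\phi^*\colon H^*(\Sat,\QQ)\to H^*(\oab,\QQ)$ is injective; moreover $\phi^*\lambda_i$ is by construction the tautological class $\lambda_i$ on $\oab$. Therefore $\phi^*(\lambda_1\cdots\lambda_g)=\lambda_1\cdots\lambda_g$ is the socle generator of $R_g\subset H^*(\oab)$, hence nonzero, so $\lambda_1\cdots\lambda_g\ne0$ already in $H^{g(g+1)}(\Sat,\QQ)$. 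Finally $\lambda_1\cdots\lambda_g\cdot1=a(\lambda_1\cdots\lambda_g)$, where $a\colon H^{2N}(\Sat,\QQ)\to IH^{2N}(\Sat,\QQ)$ is the natural map, and $a$ is an isomorphism in top degree --- a standard fact for any irreducible projective variety, both sides being canonically $\QQ$ with $a$ matching fundamental classes (equivalently, $a$ is dual, under Poincar\'e--Verdier duality, to the natural isomorphism $IH_{2N}(\Sat)\xrightarrow{\ \sim\ }H_{2N}(\Sat)$). Hence $\delta\ne0$, and $\eta$ is injective.

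I expect the one real point to be this last step --- nonvanishing of the top tautological monomial on the highly singular $\Sat$ --- which is forced by the same nonvanishing on a smooth toroidal model together with injectivity of $\phi^*$; the substantive input is just the structure of $R_g$ recalled before the proposition (one-dimensional socle in degree $g(g+1)$, perfect pairing), ultimately a consequence of the Charney--Lee computation. The remaining ingredients --- the $H^*$-module structure on $IH^*$, its Poincar\'e self-duality, and the top-degree comparison $H^{2N}\to IH^{2N}$ --- are standard, and the only technical caveat is the harmless reduction to a level cover, needed because $\Sat$ is a priori only a stack.
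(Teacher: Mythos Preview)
Your argument is correct. It takes a somewhat different route from the paper's: the paper simply invokes the decomposition-theorem inclusion $IH^*(\Sat)\subset H^*(\oab)$ from~\eqref{equ:inclusion} and observes that the composite $R_g\to IH^*(\Sat)\hookrightarrow H^*(\oab)$ is the known injection of $R_g$ into $H^*(\oab)$ (the ``perfect pairing'' being the reason the latter is injective). You instead work intrinsically with Poincar\'e duality on $IH^*(\Sat)$, reducing everything to the nonvanishing of the single number $\delta$, which you then verify on the smooth model in top degree. Both approaches rest on the same geometric input, namely $\int_{\oab}\lambda_1\cdots\lambda_g\ne0$; your version has the virtue of not invoking the decomposition theorem and of making the mechanism explicit, while the paper's is a one-liner once that machinery is in place.

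One correction: your claim that $\phi^*\colon H^*(\Sat,\QQ)\to H^*(\oab,\QQ)$ is injective in \emph{all} degrees, via ``$\phi_*\phi^*=\mathrm{id}$'', is not justified --- there is no pushforward $\phi_*$ on ordinary cohomology with singular target --- and is in fact false. Already for $g=3$, the weight-$0$ subspace $\QQ\subset H^6(\Sat[3])$ described in the Remark after Theorem~\ref{theo:IH3} must map to zero in the pure weight-$6$ group $H^6(\oab[3])$, by strictness of morphisms of mixed Hodge structures. This does not affect your proof, since you only use injectivity of $\phi^*$ in the top degree $2N=g(g+1)$, where both sides are one-dimensional and $\phi^*$ is visibly an isomorphism; you should simply restrict the claim to that degree.
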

\begin{proof}
By the map from cohomology to intersection cohomology the classes $\lambda_i$ and their products can be mapped to the intersection cohomology $IH^*(\Sat)$ preserving
relation (\ref{equ:relation}). We claim that the map $R_g \to IH^*(\Sat)$ is indeed an inclusion. Indeed, this follows from the fact that the intersection product defines a perfect pairing on $R_g$,
which shows that the only relations among the $\lambda$-classes in $IH^*(\Sat) \subset H^*(\ab^{\Sigma})$ are given by  (\ref{equ:relation}).
\end{proof}
As a consequence of the Proposition we can, and will,  regard $R_g$ as contained in both $H^*(\ab^{\Sigma})$ and $IH^*(\Sat)$.

\section{Level covers: an intermediate step}\label{sec:level}
The principal idea is to apply the decomposition theorem to a resolution $\phi_g^{\Sigma}: \ab^{\Sigma} \to \Sat$ and to use
the stratification of the base $\Sat$ given by
\begin{equation*}
  \Sat = \ab \sqcup \ab[g-1] \sqcup \ldots \sqcup \ab[0].
\end{equation*}
The main technical problem with this approach is the non-neatness of the group  $\Sp(2g,\ZZ)$, i.e.~the existence of elements with a non-trivial fixed point set. As a result $\ab^{\Sigma}$ will always have at least orbifold
singularities, and the projections $\beta_i^{\Sigma,0}:=(\phi^{\Sigma}_g)^{-1}(\ab[g-i]) \to \ab[g-i]$ will not be topological fibrations. Both problems can be solved by working with suitable level covers and taking invariant cohomology.

In what follows we will make essential use of the following commutative diagram
\begin{equation*}
\xymatrix@C=66pt
{
\ab^{\Sigma}(n)\ar[r]^{\phi_g^{\Sigma}(n)}
\ar[d]_{p_n^{\Sigma}}
&\Sat(n) \ar[d]^{p_n}\\
{\ab^{\Sigma}}\ar[r]^{\phi_g^{\Sigma}}&\Sat
}
\end{equation*}
where the vertical maps are Galois covers with Galois group $G_g(n)=\Sp(2g,\ZZ/n\ZZ)$ and  the horizontal maps are the projections from a toroidal
compactification to the Satake compactification.

We will first describe the geometry of this diagram.
Under the covering map
\begin{equation*}
p_n: \Sat(n) \to \Sat= \ab \sqcup \ab[g-1] \sqcup \ldots \sqcup \ab[0]
\end{equation*}
the inverse image $p_n^{-1}(\ab)=\ab(n)$ is irreducible, but the other strata decompose into connected components
\begin{equation}
p_n^{-1}(\ab[i])= \bigsqcup_{j=1, \ldots, N_{i,n}}Ê\ab[i]^{(j)}(n)
\end{equation}
where the numbers $N_{i,n}$ can, in principle, be computed explicitly, cf.~\cite[Chapter III]{Er}, and the $\ab[i]^{(j)}(n)$ are all copies of $\ab[i](n)$ .

We have already pointed out that, due to elements with a non-trivial fixed point set,
the contraction morphisms $\phi_{g,i}^{\Sigma}=\phi_g^{\Sigma}|_{\beta_i^{\Sigma,0}}: \beta_i^{\Sigma,0} \to \ab[g-i]$
fail to be topological fibre bundles. The situation, however, improves on the level covers.
Restricting the contraction morphism $\phi_g^{\Sigma}(n)$
to preimages of the components $\ab[g-i]^{(j)}(n)$  gives us topological fibrations
\begin{equation}\label{equ:strat}
\phi_{g,i,n}^{\Sigma,{j}}:= \phi_{g}^{\Sigma}(n)|_{ \beta^{\Sigma ,0,j}_i(n)}: \beta^{\Sigma ,0,j}_i(n) \to  \ab[g-i]^{(j)}(n).
\end{equation}
We recall that
\begin{equation}
\beta^{\Sigma ,0,j}_i(n)= \bigsqcup_{\sigma} \beta^{\Sigma ,0,j}_i(\sigma)(n)
\end{equation}
where $\sigma$ runs through all orbits with respect to the level $n$ subgroup of $\GL(i,\ZZ)$ of cones in $\Sigma$ in dimension $i$ containing matrices of rank $i$.
For any $n\geq 3$ the fibrations
\begin{equation}\label{equ:stratsigma}
\phi_{g,i,n}^{\Sigma,{j}}(\sigma): \beta^{\Sigma ,0,j}_i(\sigma)(n) \to \ab[ g-i]^{(j)}(n)=\ab[g-i](n)
\end{equation}
have the structure of a torus bundle over the $i$-fold fiber
product ${\mathcal X^{\times i}_{g-i}}(n) \to \ab[g-i](n)$. The rank of the torus fiber of $\phi_{g,i,n}^{\Sigma,{j}}(\sigma)$  is equal to $i(i+1)/2 - \dim(\sigma)$, and the fibration can be described explicitly as explained in~\cite[Section 7]{GHT}. This fibration is independent of $j$, and we will thus drop the superscript $j$ from all the related notation, whenever it is clear that we are working on a fixed component.

For future use it is useful to analyze the action of the deck transformation group $G_g(n)$ more closely.
An element $g\in G_g(n)$ either maps a component
$\ab[i]^{(j)}(n)$ to another (disjoint) component $\ab[i]^{(j')}(n)$ or to itself. Let $G_{g,i}(n)=G_{g,i}^{j}(n)$ be the stabilizer of a component $\ab[g-i]^{(j)}(n)$.
This group  acts on the fibration (\ref{equ:stratsigma}), and it can be described explicitly in terms of generators, as discussed in~\cite[Section 7]{GHT}, where these generators are denoted $g_i$, and we now recall what they are.

Let $K_{g,i}(n)$ be the kernel of $G_{g,i}(n) \to \Aut(\ab[g-i](n))$, in other words the subgroup acting on the fibers of (\ref{equ:stratsigma}).
The quotient $G_{g,i}(n) /K_{g,i}(n) \cong \Sp(2(g-i),\ZZ/n\ZZ)$ then acts on $\ab[g-i](n)$ with quotient $\ab[g-i]$.
The action of $G_{g,i}(n)$ can be described explicitly in terms of the generators given in~\cite[Section 7]{GHT}, see e.g.~\cite[Section 5]{HT2}.
Elements of the form $g_1$ and $g_3$ act on the torus bundles
$\beta^{\Sigma ,0}_i(\sigma)(n)$ either by multiplication with $n$-th roots of unity on the tori or by translation by $n$-torsion points on the universal abelian varieties.
In particular, these elements act trivially on the cohomology of the fibers. For the elements $g_4$ we have two possibilities: either such an element maps a cone
$\sigma$ to another cone $\sigma'$, in which case it also maps $ \beta^{\Sigma ,0}_i(\sigma)(n)$ to $ \beta^{\Sigma ,0}_i(\sigma')(n)$ or it maps $\sigma$, and
correspondingly also $ \beta^{\Sigma ,0}_i(\sigma)(n)$, to itself.
Finally, the elements $g_2$ act trivially on the fibers of  (\ref{equ:strat}) and generate $\Sp(2(g-i),\ZZ/n\ZZ)$.

At this point it is useful to recall a basic fact about the cohomology of local systems on $\ab$. When we speak about local systems on $\ab$ we mean, strictly speaking, local systems of
the stack $\ab$ or, in more down to earth terms, $\QQ$-local systems or orbifold local systems. More precisely, a local system on $\ab$, or, equivalently, for the group $\Sp(2g,\ZZ)$, is
given by a representation $\rho: \Sp(2g,\ZZ) \to \Aut(V)$, where $V$ is a finite dimensional vector space over $\QQ$.  Geometrically, the local system is then given by the
quotient $\calV= \Sp(2g,\ZZ) \backslash \HH_g \times V$. If $-1\in \Sp(2g,\ZZ)$ does not act trivially on $V$, or at points in $\HH_g$ which have non-trivial stabilizer, the quotient
$\calV$ is not a local system in the sense of a locally trivial vector bundle with fiber a rational vector space. We can, however, think of $\calV$ as a local system on
the level covers $\ab(n)$ for $n\geq 3$. The cohomology groups  $H^j(\ab,\calV)$ are then defined as the $G_g(n)$-invariant part $H^j(\ab(n),\calV)^{G_g(n)}$ of the cohomology
of $\calV$ on $\ab(n)$. To show that this result does not depend on the chosen level $n$, one uses  the level cover  $\ab(nm)$, which
is a Galois cover of both $\ab(n)$ and  $\ab(m)$. Independence then follows from pulling back to  $\ab(nm)$ and taking invariant cohomology.
A similar construction is also used for the intersection cohomology groups $IH^j(\Sat,\calV)$, which we also define as the invariant intersection cohomology groups
$IH^j(\Sat(n),\calV)^{G_g(n)}$. The independence of the given level then follows as above, using Proposition~\ref{prop:invariantih} below.

There are two types of local systems which will be instrumental to our computations. To explain the first type we have to go back to the fibrations given by (\ref{equ:strat}).
These define local systems
\begin{equation}\label{equ:localsys}
\calH^q(\calF_{g,i,n}^{\Sigma,{j}}) := (R^q\phi_{g,i,n}^{\Sigma,{j}})_*(\QQ)
\end{equation}
on $\ab[i]^{(j)}(n)=\ab[i](n)$.  These local systems can be computed from the corresponding local systems $\calH^q(\calF_{g,i,n}^{\Sigma,{j}} (\sigma))$
for the submersions (\ref{equ:stratsigma}), which can be determined using the torus bundle structure of these fibrations.
Fixing again a component and thus an index $j$,  we will now again drop the superscript $j$ from the notation.
Using the stratification of $\beta^{\Sigma ,0}_i(n)$, given by
orbits of the cones $\sigma$, one can construct a Gysin spectral sequence of local systems (with cohomology of compact support) whose $E_{\infty}$ terms are the local systems
$\calH^q(\calF_{g,i,n}^{\Sigma})$. For this, we note that all strata $\beta^{\Sigma ,0}_i(\sigma)(n)$ are smooth so that we can use Poincar\'e duality.
This technique was also used in ~\cite[Section 9]{GHT} as well as~\cite{HT} and~\cite{HT2}. The difference to these papers is that in our case we apply the Gysin sequence fiberwise in order to
obtain the fiber cohomology.
By construction, the group $G_{g,i}(n)$ acts on the local systems  $\calH^q(\calF_{g,i,n}^{\Sigma})$. The subgroup $K_{g,i}(n)$ acts fiberwise and we can form the invariant
subsystem, which we denote
$(\calH^q(\calF_{g,i,n}^{\Sigma}))^{K_{g,i}(n)}$.
The group $G_{g,i}(n) /K_{g,i}(n)$ now acts on $(\calH^q(\calF_{g,i,n}^{\Sigma}))^{K_{g,i}(n)}$ and this defines a local system
$\calH^q(\calF_{g,i}^{\Sigma})$ for $\Sp(2(g-i),\ZZ)$ on $\ab[g-i]$. By comparing different level structures, one sees again that this does not depend on $n$, and thus we have dropped this index, too.
It now follows from Proposition~\ref{prop:invariantih} that
\begin{equation}\label{equ_equivariant}
IH^*(\Sat[g-i],\calH^q(\calF_{g,i}^{\Sigma})) \cong IH^*(\Sat[g-i](n),\calH^q(\calF_{g,i,n}^{\Sigma}))^{G_{g,i}(n)}.
\end{equation}
We will make frequent use of this fact, often without stating this explicitly.

The second type of local systems which we will use in our application of the decomposition theorem comes from the link bundles $\calN_{g-i}(n)=\calN_{g-i}^{j}(n)$ of $\ab[g-i]^{(j)}(n)$ in $\Sat(n)$.  Taking fiberwise intersection cohomology we obtain local systems $\mIH^q(\calN_{g-i}(n))$. Again, we can take the invariant system with respect to  $K_{g,i}(n)$, which then descends to a local system on $\ab[g-i]$ which we will denote by  $\mIH^q(\calN_{g-i})$. Similarly, if we consider specifically $\ab[k]$ in $\Sat[m]$ we thus obtain linear systems $\mIH^q(\calN_{k,m})$. Note that we do not claim that this is the local system associated to the links of $\ab[k]$ in $\Sat[m]$. The latter is not even well defined because the topology of the inclusion $\ab[k] \subset \Sat[m]$ is not constant at points of $\ab[k]$ that correspond to abelian varieties with extra automorphisms.

The following Proposition is fundamental for our calculations. As we did not find it explicitly in the literature, we will state and prove it here for the sake of completeness. Let $X$ be an irreducible projective variety and $\calL$ be a local system on an open subset $U$ of $X$. We further assume that there is a finite group $G$ acting on $X$ and $\calL$, in particular we assume that $G$ maps $U$ to itself. We denote $Y=X/G$ and $V=U/G$ and by $f: XÊ\to Y$ the quotient map. Let $K$ be the subgroup of $H$ which acts trivially on $X$. Then the $K$-invariant part $\calL^K$ of $\calL$ descends to $Y$, in other words  there is a local system $\calM$ on $V$ with $f^*(\calM)=\calL$. This allows us to formulate the following proposition, the proof of which was supplied by Mark Goresky.

\begin{prop}\label{prop:invariantih}
For any $j$, there is an isomorphism
\begin{equation*}
IH^j(Y,\calM) \cong IH^j(X,\calL)^G.
 \end{equation*}
\end{prop}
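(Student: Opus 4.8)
The plan is to reduce the statement to the well-known fact that for a finite group $G$ acting on a $\QQ$-vector space, taking $G$-invariants is an exact functor, and then to combine this with the standard description of intersection cohomology via the Deligne construction, which is compatible with finite pushforwards. First I would observe that the quotient map $f\colon X \to Y$ is finite, hence affine, so $Rf_* = f_*$ is exact on sheaves; moreover, since $G$ is finite and we work with $\QQ$-coefficients, the functor $\calF \mapsto (f_*\calF)^G$ is exact and sends injectives to injectives, so it computes hypercohomology: $H^j\bigl(Y,(f_*\calF)^G\bigr) \cong H^j(X,\calF)^G$ for any complex $\calF$ of sheaves of $\QQ$-vector spaces on $X$. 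The crux is therefore to identify $(f_*\calI\calC_X(\calL))^G$ with $\calI\calC_Y(\calM)$ in the derived category on $Y$.

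The key step is to verify that $\calI\calC_Y(\calM)$ satisfies the Deligne-style axioms characterizing the middle-perversity intersection complex, after being pulled back suitably, and to exploit the compatibility of truncation functors with the exact functor $(f_*-)^G$. Concretely, choose a $G$-invariant Whitney stratification of $X$ refining $U$; it descends to a stratification of $Y$ with $V$ open, and $f$ is a stratified finite map. The intersection complex $\calI\calC_X(\calL)$ is built by the inductive procedure $j_{k*}$ followed by truncation $\tau_{\leq p(k)}$ along the strata, starting from $\calL$ on $U$. Since $f$ is finite, $f_*$ commutes with open pushforwards $j_{k*}$; since $(-)^G$ is exact and commutes with the truncation functors $\tau_{\leq m}$ (truncation is defined by kernels and images of differentials, which commute with the exact functor of taking invariants), one gets by induction on the strata that $(f_* \calI\calC_X(\calL))^G$ is obtained from $\calM = (f_*\calL)^G|_V$ by exactly the same Deligne construction on $Y$. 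Hence $(f_*\calI\calC_X(\calL))^G \cong \calI\calC_Y(\calM)$. Taking hypercohomology and using the exactness statement from the first paragraph gives
\begin{equation*}
IH^j(Y,\calM) = H^j\bigl(Y,\calI\calC_Y(\calM)\bigr) \cong H^j\bigl(Y,(f_*\calI\calC_X(\calL))^G\bigr) \cong H^j\bigl(X,\calI\calC_X(\calL)\bigr)^G = IH^j(X,\calL)^G.
\end{equation*}

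I expect the main obstacle to be the bookkeeping around the trivially acting subgroup $K$ and the fact that $f$ need not be a $G$-torsor over $V$: the fibers of $f$ over the various strata have different orders, so one must be careful that the Deligne construction on $Y$ really uses $\calM$ and the correct perversity. This is handled by noting that $\calM$ is defined precisely as the descent of $\calL^K$, that $f_*\QQ_X$ decomposes $G$-equivariantly with $\QQ_Y$ a direct summand of $(f_*\QQ_X)^G$ (the trace/averaging projector, available over $\QQ$), and that the perversity conditions are about cohomology sheaves, which commute with the exact functor $(f_*-)^G$. A secondary point worth spelling out is the comparison of stratifications: one must choose the stratification of $X$ to be $G$-invariant and fine enough that both $\calL$ and the singularities of the $G$-action are constructible with respect to it, so that it genuinely descends to a stratification of $Y$ adapted to $\calM$; this is routine since $G$ is finite.
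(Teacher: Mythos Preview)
Your proposal is correct and arrives at the same key identification $(f_*\calI\calC_X\calL)^G\cong\calI\calC_Y\calM$ as the paper. The route differs slightly in mechanics: the paper (following Goresky) observes that since $f$ is finite, $Rf_*\calI\calC_X\calL$ itself satisfies the intersection-cohomology axioms on $Y$, and then invokes the equivalence of categories between local systems on $V$ and IC-complexes on $Y$ from~\cite{GM} to conclude directly that $Rf_*\calI\calC_X\calL\cong\calI\calC_Y(f_*\calL)$; taking $G$-invariants is then handled on the level of local systems via the decomposition of $f_*\calL$ into $G$-isotypical summands. You instead commute the exact functor $(f_*\,-)^G$ step by step through the Deligne construction (open pushforward followed by truncation). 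Both arguments are standard and essentially equivalent; the paper's version trades the inductive bookkeeping you flag as your main obstacle for a single appeal to the axiomatic characterization, while yours is more self-contained but requires checking the base-change compatibility $f_*\circ Rj_{k*}\cong R\bar j_{k*}\circ f_*$ along each stratum.
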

\begin{proof}
We first recall from~\cite{GM} that the map which associates to a local system $\calL$ the intersection chain complex
$\calI\calC_X\calL$ defines an equivalence of categories between local systems on $U$ and certain complexes of sheaves on $X$, which satisfy the axioms of intersection cohomology theory.
We shall use this and the analogous statement for $V$ and $Y$. At the same time this allows us to shrink the open set $U$ and we can thus assume that $G/K$ acts freely on $U$.

We recall that the complex $Rf_*(\calL)$ is defined by replacing $\calL$ by an injective resolution, to which $f_*$ is applied. The group $G$ acts on $Rf_*(\calL)$, and taking invariant parts
we obtain a complex, which we denote $Rf_*(\calL)^{RG}$. In our situation it is easy to describe $Rf_*(\calL)^{RG}$ explicitly, up to quasi-isomorphisms.
Since $f: X \to Y$ has finite fibers, all higher cohomology groups
vanish on the fibers. Thus $Rf_*(\calL)$ is quasi-isomorphic to the local system $f_*\calL$ whose stalk at a point $y$ is given by
\begin{equation}\label{equ:pushforward}
(f_*\calL)_y= \bigoplus_{x \in f^{-1}(y)} \calL_x.
\end{equation}
From this we obtain $Rf_*(\calL)^{RG}\cong (f_*\calL)^G$.
The $G$-invariant part in each stalk of $f_*\calL$ is the equivariantly embedded stalk of $\calM$. Hence we obtain on $V$ the following isomorphisms in the derived category
\begin{equation}\label{equ:inv}
\calM  \cong (f_*\calL)^G \cong Rf_*(\calL)^{RG}  .
\end{equation}

We will now progress to intersection cohomology. From (\ref{equ:inv}), and using the equivalence between local systems on $V$ and intersection chain complexes on $Y$ we obtain
\begin{equation}\label{equ:1}
\calI\calC_Y\calM \cong \calI\calC_Y(f_*\calL)^G.
\end{equation}
Moreover, using again the equivalence of categories, we have an isomorphism (of abelian groups)
\begin{equation*}
\operatorname{Hom}(\calI\calC_Y f_*\calL,\calI\calC_Y f_*\calL) \cong \operatorname{Hom}(f_*\calL,f_*\calL)
\end{equation*}
and the action of $G$ on $f_*\calL$ and its decomposition into $G$-irreducible local systems corresponds exactly to the action of $G$ on $\calI\calC_Y f_*\calL$ and its decomposition into
$G$-irreducible complexes of sheaves. In particular, taking invariants, we obtain
\begin{equation}\label{equ:2}
(\calI\calC_Y f_*\calL)^{RG} \cong \calI\calC_Y(f_*\calL)^G.
\end{equation}

Clearly, $\calI\calC_X\calL$ satisfies the properties of intersection cohomology and, since $f$ is finite, the same holds for $Rf_*(\calI\calC_X\calL)$. Hence this is an intersection cohomology sheaf
and is itself determined by its restriction to $V$, which is $Rf_*\calL\cong f_*\calL$. The latter, as we have already seen, follows since $f$ is finite. This implies
\begin{equation*} \label{equ:3}
Rf_*(\calI\calC_X\calL) \cong \calI\calC_Y f_*\calL.
\end{equation*}
Restricting this to  $G$-invariants and further using (\ref{equ:1}) and (\ref{equ:2}) we finally obtain the following isomorphism in the derived category
\begin{equation*}
\calI\calC_Y\calM \cong Rf_*(\calI\calC_X\calL)^{RG}.
\end{equation*}
Taking cohomology  gives the claim.
\end{proof}
The results of this section finally allow us to work essentially on $\Sat$ itself, rather than on level covers. This will also be the way we present our computations. The reduction to level covers is an essential step in our computations, as it allows us to apply the decomposition theorem  for the case when the source of the map is smooth. From now on we will use this reduction to level covers implicitly and not mention it specifically in each step.

\section{The first step: the link bundle $\calN_{g-1,g}$}\label{sec:firstlink}
Before proceeding to work out the low genus cases in complete detail, we demonstrate explicitly how the above machinery works in the first step of the recursive procedure, that is over $\ab[g-1]\subset\Sat$. This is to say, we now work with Mumford's partial toroidal compactification $\Part$ --- the preimage of $\ab\sqcup\ab[g-1]\subset\Sat$ in $\ab^\Sigma$.
Note that we do not need projective varieties to apply the decomposition theorem, it is enough that $\Part \to \ab\sqcup\ab[g-1]$ is a projective morphism.

We first recall the geometry involved here. Over $\ab[g-1]$ all toroidal compactifications $\ab^{\Sigma}$ coincide, and
$\phi_g^{\Sigma}|_{\beta_1^{\Sigma,0}}: \beta_1^{\Sigma,0} \to \ab[g-1]$ is the universal Kummer variety.
For $n\geq 3$ the fibration $\phi_g^{\Sigma,j}(n)|_{\beta_1^{\Sigma,0,j}(n)} : \beta_1^{\Sigma,0,j}(n) \to \ab[g-1]^{(j)}(n)$
is the universal abelian variety. As this fibration depends neither on $j$ nor on $\Sigma$ we will again drop these from the notation. It will now be more convenient to use notation closer to Section~\ref{sec:mechanics},  and we will thus relabel this fibration as  $f_{g-1,g}(n): \calF_{g-1,g}(n) \to \ab[g-1](n)$, with topological fiber $F_{g-1,g}$ and similarly for the other strata  $f_{g-k,g}(n): \calF_{g-k,g}(n) \to \ab[g-k](n)$, with topological fiber $F_{g-k,g}$. Whenever
$g$ is fixed we will further simplify the notation by writing $F_{g-k}=F_{g-k,g}$, and will drop the level $n$ when working without level structure. Similarly, we will write $\calH^q(\calF_{g-k})=\calH^q(\calF_{g-k,g})$ and use $\mIH^q(\calN_{g-i})$, respectively $\mIH^q(\calN_{k,m})$, which were introduced in Section~\ref{sec:level}. We also recall
that for the decomposition theorem only the intersection cohomology of the links below their middle dimension plays a role.

The partial compactification $\Part(n)$ is smooth for $n\ge 3$
and the link $N_{g-1,g}(n)$ is simply the boundary of a slice  of a neighborhood of a fiber of $F_{g-1,g}(n)$, transversal to the boundary stratum, within $\Part(n)$. Globally, such neighborhoods form a tubular neighborhood of a complex codimension one subvariety of $\Part(n)$, and thus the link bundle $\calN_{g-1,g}(n)$ is an $S^1$ circle bundle over $\calF_{g-1,g}(n)=\mathcal X_{g-1}(n)$.
We note that this link is smooth and
hence we can replace its intersection cohomology by ordinary cohomology. We also note that the cohomology of an $S^1$ circle bundle over the universal abelian variety can be
computed using the Leray spectral sequence.

At this point we must  recall further basic facts about irreducible local systems on $\ab$.
These correspond to irreducible representations of $\Sp(2g,\ZZ)$, which in turn correspond  to Young diagrams   $\ud\mu = (\mu_1\geq \mu_2\geq\dots\geq \mu_g)$ with at most $g$ rows.
Equivalently, we can view $\ud\mu$ as an arbitrary partition of length at most $g$. If we denote by $\VV_1$ the standard rational representation of the group  $\Sp(2g,\ZZ)$ on $\CC^{2g}$, then the representation
$\VV_{\ud\mu}$ of $\Sp(2g,\ZZ)$ is the irreducible representation of highest weight that appears in the decomposition of the tensor product
$$
\Sym^{\mu_1-\mu_2}(\VV)\otimes \Sym^{\mu_2-\mu_3}\big(\bigwedge^2\VV\big) \otimes \dots\otimes \Sym^{\mu_{g-1}-\mu_g}\big(\bigwedge^{g-1}\VV\big)\otimes\big(\bigwedge^{g}\VV\big)^{\otimes\mu_g}
$$
into irreducible representations.
Any local system on $\ab$ defines a local system on $\ab(n)$ by pullback, or equivalently, by restricting a representation of $\Sp(2g,\ZZ)$ to the principal congruence group of level $n$ --- and we will denote this local system by the same symbol.
The local systems $\VV_{\ud\mu}$ are naturally a Hodge module of weight equal to the weight $w(\ud\mu)=\sum_{i=1}^{g}\mu_i$ of $\ud\mu$.
We will, however, not use these weights in what follows and thus not include them in our notation.

Recall that the cohomology ring of an abelian variety $A$ is
\begin{equation*}
H^*(A,\QQ)= \bigwedge H^1(A,\QQ).
\end{equation*}
Varying the abelian variety $A\in\calA_g$, the cohomology $H^1(A,\QQ)$ forms precisely the local system $\VV_1$.
Here we simply write $\VV_1$ rather than $\VV_{10\ldots 0}$ with $g-1$ zeroes. We will adapt a similar convention also for other local systems, dropping all the zero weights. This means that the same symbol can denote local systems on
different spaces $\ab$, but we hope that it will always be clear from the context on which space we are working. For future reference we recall
\begin{prop}\label{prop:univfamily}
The local systems associated to the universal family $\pi(n): \calX_g(n) \to \ab(n)$ are given, for $q\leq g$, by
\begin{equation}\label{equ:localuniversal}
 R^q\pi_*\QQ=
 \bigwedge^{q} \VV_1=\VV_{1^{q}}\oplus\VV_{1^{q-2}}\oplus\cdots\oplus\VV_{p(q)}
\end{equation}
where $p(q) \in \{0,1\}$ is the parity of $q$. The local systems in the range $q>g$ are given by duality.
\end{prop}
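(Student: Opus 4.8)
The plan is to reduce the statement to the decomposition of exterior powers of the standard representation of the symplectic group. First I recall that for a complex abelian variety $A$ of dimension $g$ one has the canonical isomorphism of graded $\QQ$-algebras $H^*(A,\QQ)=\bigwedge^\bullet H^1(A,\QQ)$. Letting $A$ vary over $\ab(n)$, the groups $H^1(A,\QQ)$ form precisely the local system $\VV_1$ attached to the standard representation of $\Sp(2g,\ZZ)$ on $\QQ^{2g}$, so that $R^q\pi(n)_*\QQ=\bigwedge^q\VV_1$ as local systems on $\ab(n)$. It therefore suffices to decompose $\bigwedge^q\VV_1$ into irreducible $\Sp(2g)$-subrepresentations.

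For this I would use the symplectic analogue of the Lefschetz decomposition. Identifying $\VV_1$ with its dual by means of the symplectic form $\omega$, contraction with $\omega$ defines maps $\Lambda\colon\bigwedge^q\VV_1\to\bigwedge^{q-2}\VV_1$, and wedging with $\omega$ defines maps in the opposite direction; the ``primitive'' part $P^q:=\ker(\Lambda)\subset\bigwedge^q\VV_1$ is, for $q\le g$, the irreducible representation of highest weight $(1^q,0^{g-q})$, i.e.~$\VV_{1^q}$, and one has the multiplicity-free decomposition $\bigwedge^q\VV_1=\bigoplus_{k\ge 0}\omega^k\wedge P^{q-2k}\cong\bigoplus_{k\ge 0}\VV_{1^{q-2k}}$, the sum running over those $k$ with $q-2k\ge 0$. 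This is exactly $\VV_{1^{q}}\oplus\VV_{1^{q-2}}\oplus\cdots\oplus\VV_{p(q)}$, where the last term is $\VV_0$ or $\VV_1$ according to the parity $p(q)$ of $q$; the decomposition and the irreducibility of $P^q$ are standard facts about the defining representation of $\Sp(2g)$ and hold precisely in the stable range $q\le g$, since there all the partitions $1^{q-2k}$ have length at most $g$.

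For the range $q>g$ I would invoke Poincar\'e duality along the fibers of $\pi(n)$. Since $\dim_\RR A=2g$, cup product gives a perfect pairing $H^q(A,\QQ)\times H^{2g-q}(A,\QQ)\to H^{2g}(A,\QQ)$, and $H^{2g}(A,\QQ)=\bigwedge^{2g}\VV_1$ is the one-dimensional trivial representation of $\Sp(2g,\ZZ)$ (a symplectic transformation has determinant $1$; as we do not keep track of weights, we suppress the Tate twist). As $\VV_1$, and hence each $\VV_{1^m}$, is self-dual, this yields $R^q\pi(n)_*\QQ\cong R^{2g-q}\pi(n)_*\QQ$, which reduces the range $q>g$ to the already-treated range $q\le g$.

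The only delicate point --- the one a careful reader might want spelled out --- is the assertion that $\bigwedge^q\VV_1$ is multiplicity-free with summands exactly the $\VV_{1^{q-2k}}$; this is precisely where the hypothesis $q\le g$ is used, since outside the stable range the exterior powers of the standard representation of $\Sp(2g)$ no longer decompose in this uniform way. Everything else is formal.
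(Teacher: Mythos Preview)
Your proof is correct and follows essentially the same route as the paper, which simply says the claim ``follows immediately from the representation theory of the symplectic group'' and cites \cite[Lemma~4.1]{GHT}; you have just spelled out the standard symplectic Lefschetz decomposition of $\bigwedge^q\VV_1$ and the Poincar\'e duality for $q>g$ that underlie that one-line reference. The surrounding text in the paper already records the identification $R^q\pi_*\QQ=\bigwedge^q\VV_1$, so your first paragraph is exactly the intended reduction.
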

\begin{proof}
This follows immediately from the representation theory of the symplectic group, see also~\cite[Lemma 4.1]{GHT}.
\end{proof}
With these preparations we can now compute the (intersection) cohomology of the link bundles $\calN_{g-1,g}$.
\begin{prop}\label{prop:homlink}
For $q \leq g-1$ we have
\begin{equation*}
\mIH^q(\calN_{g-1,g})=\begin{cases}
                        \VV_{1^{q/2}} & \mbox{if $q$ is even}  \\
                        0 & \mbox{if $q$ is odd}.
                      \end{cases}
\end{equation*}
\end{prop}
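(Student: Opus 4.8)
The plan is to compute the cohomology of the circle bundle $\calN_{g-1,g}(n) \to \calX_{g-1}(n)$ via the Leray (Gysin) spectral sequence, and then to take $G_{g,1}(n)$-invariants. Recall from Section~\ref{sec:firstlink} that $\calN_{g-1,g}(n)$ is an $S^1$-bundle over the universal abelian variety $\calX_{g-1}(n)\to\ab[g-1](n)$, and that its Euler class is a standard, nonzero multiple of the universal theta (polarization) class $\Theta$; this is exactly Mumford's description of the partial toroidal compactification, so the torus bundle $\calN_{g-1,g}(n)$ is the complement of the zero section in the total space of the Poincar\'e-type line bundle whose first Chern class is $\Theta$. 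Thus the Gysin sequence reads
\begin{equation*}
\cdots \to H^{q-2}(\calX_{g-1}(n)) \xrightarrow{\ \cup\,\Theta\ } H^{q}(\calX_{g-1}(n)) \to H^{q}(\calN_{g-1,g}(n)) \to H^{q-1}(\calX_{g-1}(n)) \xrightarrow{\ \cup\,\Theta\ } H^{q+1}(\calX_{g-1}(n)) \to \cdots
\end{equation*}
so that, as local systems on $\ab[g-1](n)$ (after descending $K_{g,1}(n)$-invariants), $\mIH^q(\calN_{g-1,g})$ fits into a short exact sequence with left term $\operatorname{coker}(\cup\,\Theta\colon \calH^{q-2}(\calX_{g-1}) \to \calH^{q}(\calX_{g-1}))$ and right term $\ker(\cup\,\Theta\colon \calH^{q-1}(\calX_{g-1})\to \calH^{q+1}(\calX_{g-1}))$.

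The second step is purely representation-theoretic. By Proposition~\ref{prop:univfamily}, for $q\le g$ the local system $\calH^q(\calX_{g-1})=\bigwedge^q\VV_1$ decomposes as $\VV_{1^q}\oplus\VV_{1^{q-2}}\oplus\cdots$. The operator $\cup\,\Theta$ is, up to scalar, the Lefschetz operator $L$ on the fiber $H^*(A)=\bigwedge H^1(A)$ attached to the polarization form. By the $\mathfrak{sl}_2$-theory (hard Lefschetz on each abelian fiber), $L\colon \bigwedge^{q-1}\VV_1\to\bigwedge^{q+1}\VV_1$ is injective for $q-1 < g-1$, i.e.\ for $q\le g-1$, so the right-hand kernel term above vanishes in our range; and the cokernel $\operatorname{coker}(L\colon\bigwedge^{q-2}\VV_1\to\bigwedge^q\VV_1)$ is precisely the primitive part $\VV_{1^q}$ when $q\le g-1$. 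Hence for $q\le g-1$ one gets $\mIH^q(\calN_{g-1,g})\cong\VV_{1^q}$ as a local system over the level cover — but this is the cohomology \emph{with $\ZZ/n$-level local system}, and we must still cut down to the genuine local system on $\ab[g-1]$; this requires identifying which summand survives the residual action, and this is where one uses that the Euler class is $\Theta$ rather than a multiple that could be killed.

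The final and most delicate step is the descent/invariance under the deck group, which is also where the division by $2$ in the exponent (and the vanishing in odd degree) really comes from. The link bundle $\calN_{g-1,g}$ on the stack $\ab[g-1]$ is the $K_{g,1}(n)$-invariant part of $\calN_{g-1,g}(n)$; by the discussion in Section~\ref{sec:level}, the generators $g_1,g_3$ act on the torus fiber by roots of unity and on the abelian fibers by translation, hence trivially on cohomology, while the subtlety is the action of $-1\in\Sp$ (and of $g_2$) on $H^1$ of the abelian fibers, which acts by $-1$ on $\VV_1$ and thus by $(-1)^q$ on $\bigwedge^q\VV_1$. But the circle bundle coordinate itself transforms with a sign under the relevant element (the Euler class $\Theta$ is $(-1)$-equivariant in the appropriate sense), so the surviving classes in $H^q(\calN_{g-1,g})$ are exactly the ones of the ``correct'' parity: in even fiber-degree $q=2p$ the primitive class $\VV_{1^p}$ descends, while in odd degree everything is killed. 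I expect the main obstacle to be making this parity/equivariance bookkeeping precise — spelling out the action of the relevant deck generators on both the Gysin sequence and the Euler class simultaneously, so that the claimed answer $\VV_{1^{q/2}}$ (and not, say, $\VV_{1^q}$ or $\VV_{1^{q/2}}$ shifted) drops out — rather than the spectral sequence or the $\mathfrak{sl}_2$-decomposition, both of which are standard.
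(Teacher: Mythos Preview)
Your Gysin/hard-Lefschetz computation on the level cover is correct and is exactly what the paper does: the Leray spectral sequence for the $S^1$-bundle degenerates after $d_2$, and the statement that $d_2=\cup\,\Theta$ has ``maximal rank'' in the paper is precisely the hard Lefschetz injectivity you invoke on the abelian fibres. So through the point where you obtain $\mIH^q(\calN_{g-1,g}(n))\cong\VV_{1^q}$ for $q\le g-1$, your argument and the paper's coincide.

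The gap is in your ``final and most delicate step''. You have been misled by a typo in the displayed formula: the correct answer is $\VV_{1^{q}}$, not $\VV_{1^{q/2}}$, as one checks from every place the proposition is actually used (e.g.\ $\mIH^2(\calN_{2,3})=\VV_{11}$ and $\mIH^2(\calN_{3,4})=\VV_{11}$, which is $\VV_{1^2}$, not $\VV_{1^1}$). Consequently there is no further ``halving'' to perform, and your heuristic about the circle coordinate picking up a sign is wrong: the element of $K_{g,1}(n)$ that the paper singles out --- the block matrix with $A=D=\diag(-1,\dots,-1,1)$ --- acts by $-1$ on the abelian fibres and \emph{trivially} on the $S^1$ (the $\GL(1,\ZZ)$-action on the toric direction factors through $\Sym^2$, hence through squares). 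Taking invariants under this element simply kills the odd-$q$ contribution $\VV_{1^q}$ (on which it acts by $(-1)^q$) and leaves the even-$q$ contribution intact. The remaining generators $g_1,g_3$ act trivially on cohomology, and the $g_4$-element $-1\in\GL(1,\ZZ)$ has the same effect as the element just described, so nothing further happens. In short: your spectral-sequence step already produces the right local system; the descent only imposes the parity constraint, not a change of index.
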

\begin{proof}
We recall that on a level $n$ cover $\beta_1^{\Sigma,0}(n)=\calF_{g-1,g}(n)=\calX_{g-1}(n)$. In this case we have only one stratum $\beta_1^{\Sigma,0}(n)(\sigma)=\beta_1^{\Sigma,0}(n)$
as there is only one non-trivial cone in dimension $1$. The link bundle $\calN_{g-1,g}(n)$ is an $S^1$ circle bundle over $\calX_{g-1}(n)$ and we can use the associated Leray spectral sequence to
compute the (intersection) cohomology. At this point we can simplify matters by restricting to the invariant cohomology. This is very easy here: the elements of the stabilizer of type  $g_1$ and
$g_3$ (see~\cite[Section 7]{GHT} for this notation) act trivially on the cohomology of both $\calX_{g-1}(n)$ and the fiber $S^1$. On the other hand the element
$$
\begin{pmatrix}
A & B\\
C & D
\end{pmatrix} \in \Sp\left(2(g-i),\ZZ\right)
$$
with $B=C=0$, and $A=D$ the diagonal matrix with diagonal $(-1,\ldots,-1,1)$ acts by $-1$ on $\calX_{g-1}(n)$, and acts trivially on $S^1$.
Thus its action kills all of the odd degree cohomology of $\calX_{g-1}(n)$ and acts trivially on the rest (as it should, since we know we are eventually dealing with the universal Kummer family).
The Leray spectral sequence in the relevant range is thus given by Table~\ref{tab:cohlink1} where we have depicted the case that $g-1$ is even.
In case $g-1$ is odd, the last column is $0$.

\begin{table*}[!hbtp]
\scalebox{1.00}{
$
 \begin{array}{r|cccccccc}
 q&&&&&&&&\\
 1&\QQ&0&\QQ \oplus \VV_{11}&0 & \cdots &0& \QQ \oplus \VV_{11} \oplus \cdots \oplus \VV_{1^{g-1}}&\\
 0&\QQ&0&\QQ \oplus \VV_{11}&0 & \cdots &0& \QQ \oplus \VV_{11} \oplus \cdots \oplus \VV_{1^{g-1}}&\\
  \hline
  &0&1&2&3&\cdots & g-2&g-1&p\\
 \end{array}
 $
 }
 \caption{Leray spectral sequence for $\calN_{g-1,g}$}\label{tab:cohlink1}
\end{table*}

The differential in the Leray spectral sequence is given by mapping the generator of the fiber cohomology to the Euler class of the $S^1$ circle bundle.
Since the first Chern class of the normal bundle of the boundary in  $\Part$ is relatively ample, it follows that the
differentials $d_2^{pq}: E_2^{p,q} \to E_2^{p+2,q-1}$ are always of maximal rank. The claim now follows immediately from Table~\ref{tab:cohlink1}.
\end{proof}

\section{Computation of $IH^*(\Sat)$ for $g\le 2$}\label{sec:computations}
For $g=1$ the situation is trivial: $\Sat[1]\cong \PP^1$  is smooth
and thus  $IH^0(\Sat[1],\QQ) \cong IH^2(\Sat[1],\QQ) \cong \QQ$ and $IH^1(\Sat[1],\QQ)=0$. This can also
be rephrased by saying that $IH^*(\Sat[1],\QQ) \cong R_1$.

For  $g=2$  Mumford~\cite{Mu} determined the Chow ring of  $\overline \calM_2$. For the Chow ring of  $\oab[2]$ see~\cite{vdG}, and for the cohomology ring  see~\cite{HT}. The Betti numbers of $\oab[2]$   are given by the following table
\begin{equation*}
\begin{array}{r|ccccccc}
j&0&1&2&3&4&5&6 \\\hline
b_j=\dim H^j(\oab[2])&1&0&2&0&2&0&1
\end{array}
\end{equation*}
Recall from the decomposition theorem that $H^*(\oab[2] ,\QQ)$ is the direct sum of $IH^*(\Sat[2],\QQ)$ and some
(shifted) contributions from local systems on $\ab[0]$ and $\ab[1]$. However, note that the map $\phi_2: \oab[2] \to \Sat[2]$ is semi-small,
i.e. $r(\phi_2)=0$, and thus there are no non-trivial shifts.

We will now work out the details of the decomposition theorem and investigate which local systems appear. The first non-trivial step is to consider the stratum $\ab[1] \subset \Sat[2]$, by specializing the results of the previous section.
The following table shows what part of the cohomology we have accounted for so far from the intersection complex
$$
 \begin{array}{r|c|c|c}
 2&&&\mIH^2(\calN_{0,2})\\
 1&&\mIH^1(\calN_{1,2})=0& \mIH^1(\calN_{0,2})\\
 0&\QQ&\mIH^0(\calN_{1,2})=\QQ& \mIH^0(\calN_{0,2})=\QQ\\
 \hline
 &\ab[2]&\ab[1]& \ab[0]
 \end{array}
$$
This we must compare with the local systems given by the fiber cohomology, which is depicted in the following table
$$
 \begin{array}{r|c|c|c}
 2&&\calH^2(\calF_{1,2})=\underline{\QQ}&\calH^2(\calF_{0,2})=\QQ\\
 1&& \calH^1(\calF_{1,2})=0& \calH^1(\calF_{0,2})=0\\
 0& \QQ& \calH^0(\calF_{1,2})=\QQ &\calH^0(\calF_{0,2})=\QQ \\
 \hline
 &\ab[2]&\ab[1]& \ab[0]
 \end{array}
$$
Here the entries over $\ab[1]$ and $\ab[0]$ follow from the fact that $F_{1,2}$
is the Kummer variety of an elliptic curve (and thus a $\PP^1$), while $F_{0,2}$ is the quotient of a nodal $\PP^1$ by the involution, and thus also a $\PP^1$.
We now want to identify the new local systems that appear in the decomposition theorem. Over $\ab[1]$, we see that the only part of the cohomology of the fiber $\calH^*(\calF_{1,2})$ that has not been accounted for by $\mIH^*(\calN_{1,2})$ is $\calH^2(\calF_{1,2})=\QQ$, which is thus a new local system which appears in the decomposition theorem.
We have indicated this by underlining this system $\underline{\QQ}$, and in what follows we will always use this notation to indicate new local systems, i.e. those which have not been
accounted for by contributions from the bigger strata.
We also note that indeed the new local systems supported on $\ab[1]$ must be symmetric around degree 2 by Proposition~\ref{prop:symmetry}. On $\ab[0]$ this new local system $\underline{\QQ}$ on $\ab[1]$ contributes $\mIH^0(\calN_{0,1})=\QQ$ to the cohomology of $\calF_{0,2}$, accounting for $\calH^2(\calF_{0,2})=\QQ$. Thus there are no further new local systems supported on $\ab[0]$. Comparing the cohomology of the fiber with the link cohomology, and furthermore recalling that the intersection cohomology of $\calN_{0,2}$ satisfies Poincar\'e duality for degree 5, we obtain the results on the intersection cohomology of $\calN_{0,2}$ given in Theorem~\ref{thm:links}.

Summarizing, in genus 2 the decomposition theorem gives us simply
$$
 H^j(\oab[2])=IH^j(\Sat[2])\oplus IH^{j-2}(\Sat[1]).
$$
Since we know $IH^*(\Sat[1])=H^*(\Sat[1])=H^*(\PP^1)$, the knowledge of $H^*(\oab[2])$ yields
\begin{prop}\label{prop:IH2}
The Betti numbers  $ib_j:=\dim IH^j(\Sat[2],\QQ)$ are given by
\begin{equation} \label{equ:ibetti2}
\begin{array}{r|ccccccc}
j&0&1&2&3&4&5&6\\\hline
ib_j&1&0&1&0&1&0&1
\end{array}
\end{equation}
In particular there is an isomorphism $IH^*(\oab[2],\QQ)\cong R_2$ of graded vector spaces between the intersection cohomology and the tautological ring.
\end{prop}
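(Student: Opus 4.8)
The plan is to extract $IH^*(\Sat[2])$ from the decomposition theorem for the contraction $\phi_2\colon\oab[2]\to\Sat[2]$, whose source is (stack) smooth, and then to match the outcome with the tautological ring via Proposition~\ref{prop:tautological}.

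First I would check that $\phi_2$ is semi-small: over a point of $\ab[1]$ the fibre is the Kummer variety of an elliptic curve, hence a $\PP^1$, and $2\cdot1+\dim\ab[1]-3=0$; over the point $\ab[0]$ the fibre is again a $\PP^1$ and $2\cdot1+0-3<0$; over $\ab[2]$ the fibre is a point. So $r(\phi_2)=0$ and no shifts $[-i]$ with $i\neq0$ occur in~\eqref{equ:decomp2}. Combining this with the stratum-by-stratum bookkeeping carried out just above the statement --- namely that over $\ab[1]$ the fibre cohomology $\calH^*(\calF_{1,2})$ produces a single new constant local system $\underline{\QQ}$ in degree $2$, while over $\ab[0]$ the fibre cohomology $\calH^*(\calF_{0,2})$ is already exhausted by $\mIH^*(\calN_{0,2})$ together with the contribution of that new system --- the decomposition~\eqref{equ:inclusion} collapses to
$$
H^m(\oab[2])\;\cong\;IH^m(\Sat[2])\;\oplus\;IH^{m-2}(\Sat[1]),
$$
the shift being $m-n+s_1+i=m-3+1+0=m-2$.

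It then remains to substitute known quantities. Since $\Sat[1]\cong\PP^1$ is smooth, $IH^*(\Sat[1])=H^*(\PP^1)$ has Poincar\'e polynomial $1+t^2$, and the Betti numbers of $\oab[2]$ recalled above give Poincar\'e polynomial $1+2t^2+2t^4+t^6$. Subtracting, $\sum_m\dim IH^m(\Sat[2])\,t^m=1+t^2+t^4+t^6$, which is precisely the table~\eqref{equ:ibetti2}. Finally, Proposition~\ref{prop:tautological} provides an injection $R_2\hookrightarrow IH^*(\Sat[2])$ of graded vector spaces; since $R_2=\QQ[\lambda_1,\lambda_2]$ has basis $1,\lambda_1,\lambda_2,\lambda_1\lambda_2$ in degrees $0,2,4,6$, hence the same Poincar\'e polynomial $1+t^2+t^4+t^6$, this injection is an isomorphism, which gives $IH^*(\Sat[2])\cong R_2$ as graded vector spaces.

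In this genus the argument is essentially bookkeeping once the two external inputs --- semi-smallness of $\phi_2$ and the Betti numbers of $\oab[2]$ --- are in hand, so there is no substantial obstacle. The only point that genuinely relies on the preceding analysis is the absence of new local systems over $\ab[0]$: one must know that $\calH^2(\calF_{0,2})=\QQ$ is exactly accounted for by $\mIH^0(\calN_{0,1})=\QQ$ carried up from the new system on $\ab[1]$, since otherwise the correction term above would be strictly larger than $IH^{m-2}(\Sat[1])$. As a consistency check, Proposition~\ref{prop:symmetry} forces the new local systems over $\ab[1]$ to be symmetric about complex codimension $2$ (here vacuously, as $\phi_2$ is semi-small), in agreement with the single system $\underline{\QQ}$ appearing in degree $2$.
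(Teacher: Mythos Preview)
Your proof is correct and follows essentially the same route as the paper: verify semi-smallness of $\phi_2$, use the stratum-by-stratum analysis to conclude $H^m(\oab[2])\cong IH^m(\Sat[2])\oplus IH^{m-2}(\Sat[1])$, subtract the known $IH^*(\Sat[1])=H^*(\PP^1)$ from the known Betti numbers of $\oab[2]$, and then invoke Proposition~\ref{prop:tautological} to identify the result with $R_2$. The only addition is your explicit semi-smallness computation and the closing consistency check via Proposition~\ref{prop:symmetry}, neither of which the paper spells out, but the core argument is identical.
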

\begin{rem}
Comparing with~\cite[Proposition 3]{Ha} we find that there is also an isomorphism
$$
IH^*(\Sat[2],\QQ) \cong H^2(\Sat[2],\QQ).
$$
As we shall see this is a coincidence that does not even extend to $g=3$.
\end{rem}

\section{Computation of $IH^*(\Sat[3])$}
We now move on to genus $3$. The main result is:
\begin{thm}\label{theo:IH3}
The intersection Betti numbers of $\Sat[3]$, denoted  $ib_j:=\dim IH^j(\Sat[3],\QQ)$ are given by the following table:
\begin{equation} \label{equ:ibetti3}
\begin{array}{r|ccccccccccccc}
j&     0&1&2&3&4&5&6&7&8&9&10&11&12\\\hline
ib_j&1&0&1&0&1&0&2&0&1&0&1 & 0  &1
\end{array}
\end{equation}
In particular, there is an isomorphism $IH^*(\Sat[3],\QQ)\cong R_3$ of graded vector spaces.
\end{thm}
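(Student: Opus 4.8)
The plan is to run the decomposition‑theorem machinery of Sections~\ref{sec:mechanics}--\ref{sec:level} for the contraction $\phi_3\colon\oab[3]\to\Sat[3]$ from the (unique, stack smooth) toroidal compactification, with respect to the stratification $\Sat[3]=\ab[3]\sqcup\ab[2]\sqcup\ab[1]\sqcup\ab[0]$. Throughout one passes to a level cover $\ab[3]^\Sigma(n)\to\Sat[3](n)$, applies the smooth‑source form~(\ref{equ:inclusion}) of the decomposition theorem (with $-D$ as the relatively ample class $\eta$; note that, unlike in genus $2$, $\phi_3$ is not semi‑small, so genuine shifts occur), and takes $G_3(n)$‑invariants, which is legitimate by Proposition~\ref{prop:invariantih}. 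Since the rational cohomology $H^*(\oab[3],\QQ)$ is known, and~(\ref{equ:inclusion}) exhibits it as $IH^*(\Sat[3])$ plus an explicit sum of shifted groups $IH^*(\Sat[2],\calG)$, $IH^*(\Sat[1],\calG)$ and $IH^0(\ab[0],\calG)=\calG$ attached to the new local systems $\calG$ supported on the deeper strata, it suffices to list those new local systems, evaluate their intersection cohomology, and subtract: the complement is $IH^*(\Sat[3])$. Comparing the outcome with the Poincar\'e polynomial $1+t^2+t^4+2t^6+t^8+t^{10}+t^{12}$ of $R_3$ and invoking the inclusion $R_3\hookrightarrow IH^*(\Sat[3])$ from Proposition~\ref{prop:tautological} then upgrades the numerical equality to the asserted isomorphism of graded vector spaces.

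First I would treat the stratum $\ab[2]$: here $\calF_{2,3}\to\ab[2]$ is the universal Kummer surface, so its fibre cohomology $\calH^q(\calF_{2,3})$ is the $\{\pm1\}$‑invariant part of $\bigwedge^q\VV_1$, known explicitly from Proposition~\ref{prop:univfamily}; comparing it with $\mIH^q(\calN_{2,3})$ from Proposition~\ref{prop:homlink} (in the range below the middle dimension of the link) isolates the new local systems $\calL_{i,2,\beta}$ over $\ab[2]$, whose reflective symmetry about $\codim(\ab[2],\Sat[3])$ is forced by Proposition~\ref{prop:symmetry} and used to pin down the high‑degree ones. Next comes $\ab[1]$: the fibration $\calF_{1,3}\to\ab[1]$ is a torus bundle over the square of the universal elliptic curve, and one computes $\calH^*(\calF_{1,3})$ as a collection of local systems on $\ab[1]$ via the Gysin spectral sequence over the $\GL(2,\ZZ)$‑orbits of rank‑$2$ cones, taking the relevant $G_{3,2}(n)$‑invariants; one then subtracts the contribution of the truncated intersection complex of $\Sat[3]$ together with the contributions $\mIH^*(\calN_{1,2},\calG)$ of the systems $\calG$ already produced over $\ab[2]$ (Table~\ref{tab:propQ2}), and what remains is the set of new systems over $\ab[1]$, again symmetrised by Proposition~\ref{prop:symmetry} about $\codim(\ab[1],\Sat[3])$. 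Finally over the point $\ab[0]$ everything is a vector space and the bookkeeping closes up, relative Hard Lefschetz disposing of any residual ambiguity; along the way this also reads off $\mIH^*(\calN_{2,3})$ and $\mIH^*(\calN_{1,3})$, feeding Theorem~\ref{thm:links}.

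To turn the list of new local systems into numbers one needs $IH^*(\Sat[2],\calG)$ for the handful of systems $\calG$ occurring over $\ab[2]$ (chiefly the trivial system $\QQ$, handled by Proposition~\ref{prop:IH2}, together with a few others such as $\VV_{11}$), $IH^*(\Sat[1],\calG)$ for those over $\ab[1]$, and $\calG$ itself over the point $\ab[0]$. These are produced by the same inductive procedure one genus lower: $\Sat[1]$ is smooth, so $IH^*(\Sat[1],\calG)$ is the invariant cohomology of $\calG$ on the modular curve, which is classical; and for $\Sat[2]$ one reruns the (now very short) bookkeeping of Section~\ref{sec:computations} with the required twisted coefficients, using the known cohomology of local systems on $\ab[2]$ and on the universal abelian surface. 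Substituting all shifted contributions into~(\ref{equ:inclusion}) and subtracting from $H^*(\oab[3],\QQ)$ then yields the table of the $ib_j$, completing the proof.

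The genuine obstacle is not the decomposition theorem itself but the explicit control of the intermediate objects over $\ab[1]$: computing the cohomology of the torus‑bundle fibration $\calF_{1,3}\to\ab[1]$ as a \emph{sheaf} of local systems --- that is, with correct global monodromy on $\ab[1]$ rather than only fibrewise --- through the Gysin spectral sequence, and keeping the level‑cover bookkeeping under the deck groups $G_{3,i}(n)$ faithful. A secondary difficulty is ensuring that the twisted link cohomology $\mIH^*(\calN_{1,2},\calG)$ and the intersection cohomology of the few nontrivial local systems appearing on $\Sat[2]$ and $\Sat[1]$ are correctly evaluated; the finitely many potential unknowns there are nailed down by degree reasons, the reflective symmetry of Proposition~\ref{prop:symmetry}, and relative Hard Lefschetz.
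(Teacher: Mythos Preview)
Your approach is essentially the paper's, but two simplifications make the execution far lighter than you anticipate. First, over $\ab[2]$ the comparison of $\calH^*(\calF_{2,3})$ with $\mIH^*(\calN_{2,3})$ shows that $\VV_{11}$ is \emph{not} new: since $\mIH^2(\calN_{2,3})=\VV_{11}$ by Proposition~\ref{prop:homlink}, the $\VV_{11}$ summand of $\calH^2(\calF_{2,3})=\QQ\oplus\VV_{11}$ is already accounted for by the intersection complex of $\Sat[3]$, and the only new systems on $\ab[2]$ are two copies of $\QQ$ (in degrees~2 and~4). Consequently no twisted groups such as $IH^*(\Sat[2],\VV_{11})$ or $\mIH^*(\calN_{1,2},\VV_{11})$ are ever needed --- every new local system in the whole computation is $\QQ$. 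Second, the paper avoids the full Gysin analysis of $\calF_{1,3}$ and the stratum $\ab[0]$ altogether: from~\cite{HT} one only needs $\calH^5(\calF_{1,3})=0$ and $\calH^6(\calF_{1,3})=\QQ$, whence Proposition~\ref{prop:symmetry} forces the new systems on $\ab[1]$ to be $\QQ[-1]\oplus\QQ[1]$. One then writes down the containment
\[
H^*(\oab[3])\supseteq IH^*(\Sat[3])\oplus IH^*(\Sat[2])[-1]\oplus IH^*(\Sat[2])[1]\oplus IH^*(\Sat[1])[-1]\oplus IH^*(\Sat[1])[1],
\]
plugs in the known Betti numbers of the right-hand summands together with $r^j=\dim R_3^j\le ib_j$, and observes that the totals already match $h^j(\oab[3])$ in every degree. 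Hence the containment is an equality, there are no further new systems on $\ab[0]$, and $IH^*(\Sat[3])=R_3$. Your plan is correct but does more work than required; the ``genuine obstacle'' you flag over $\ab[1]$ evaporates once you notice that only the top two fibre degrees are needed and that all new systems are trivial.
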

\begin{rem}
Note that in this case $H^*(\Sat[3],\QQ)  \not\cong IH^*(\Sat[3],\QQ)$. Indeed, by~\cite[Theorem 2]{Ha} the middle cohomology $B:=H^6(\Sat[3],\QQ) $ is the 3-dimensional extension
$$
0 \to \QQ \to B \to 2\,\QQ(-3)\to 0.
$$
Since this is clearly not algebraic,  the map $H^*(\Sat[3],\QQ) \to IH^*(\Sat[3],\QQ)$ must have a non-trivial kernel (this map can easily be seen to be surjective, so that the kernel is the space of all non-algebraic classes).
\end{rem}
\begin{proof}
As before we really have to work on a level cover, but, using invariant cohomology in the spirit described above, we will formulate the entire discussion on   $\oab[3]$ itself. We shall again use the decomposition theorem, this time for the morphism $\phi_3:\oab[3]\to \Sat[3]$, and the fact that $\oab[3]$ is a smooth projective stack.
The cohomology $H^*(\oab[3],\QQ)$ was computed in~\cite{HT}, and along the way the cohomology of each fiber $F_{i,3}$, and of the fibration $\calF_{i,3}$ was also determined. We start our computations on $\calA_3\sqcup\calA_2\subset\Sat[3]$:
\begin{equation}
 \begin{array}{r|c|cl|cl}
 4&&&&\calH^4(\calF_{2,3})&=\underline{\QQ}\\
 3&&&&\calH^3(\calF_{2,3})&=0\\
 2&&\mIH^2(\calN_{2,3})&=\VV_{11}&\calH^2(\calF_{2,3})&=\underline{\QQ}\oplus\VV_{11}\\
 1&&\mIH^1(\calN_{2,3})&=0&\calH^1(\calF_{2,3})&=0\\
 0&\QQ&\mIH^0(\calN_{2,3})&=\QQ&\calH^0(\calF_{2,3})&=\QQ\\
 \hline
 & \calA_3&\multicolumn{2}{c|}{\hbox{Contributions on $\calA_2$}}& \calH^*\, (\calF_{2,3})
 \end{array}
\end{equation}
The first column of this table is simply the intersection cohomology complex on $\ab[3]$, and the second column
gives its contribution on $\ab[2]$, whereas the last column lists the local systems that form  the fiber cohomology of the fibration $\calF_{2,3}$.
The cohomology $\mIH^*(\calN_{2,3})$
is given by Proposition~\ref{prop:homlink}, and contributes in degree up to two, while $\calH^*(\calF_{2,3})$ follows from Proposition~\ref{prop:univfamily} (and also was determined in \cite{HT}).

As a result, we can identify the new local systems supported on $\ab[2]$, which are underlined in the table above. Note that they are indeed symmetric around degree 3, as they should be by Proposition~\ref{prop:symmetry}.

We proceed to work over $\ab[1]\subset\Sat[3]$; the cohomology $\calH^*(\calF_{1,3})$ was computed, as local systems over $\ab[1]$, in the table in~\cite[Lemma~8]{HT}. In fact, all that we need from these computations is that $\calH^6(\calF_{1,3})=\QQ$ and that $\calH^5(\calF_{1,3})=0$.
The situation is summarized as follows
\begin{equation}\label{equ:contrA1}
 \begin{array}{r|c|c|ccc|l}
 6&&&&&&\underline{\QQ}\\
 5&&&&&\mIH^1(\calN_{1,2})&0\\
 4&&\QQ&\mIH^4(\calN_{1,3})&&\mIH^0(\calN_{1,2})&\underline{\QQ}\oplus \QQ\\
 3&&&\mIH^3(\calN_{1,3})&\mIH^1(\calN_{1,2})&&0\\
 2&&\QQ\hskip1cm&\mIH^2(\calN_{1,3})&\mIH^0(\calN_{1,2})&&\QQ\\
 1&&&\mIH^1(\calN_{1,3})&&&0\\
 0&\QQ&&\mIH^0(\calN_{1,3})&&&\QQ\\
 \hline
 &\calA_3& \calA_2&\multicolumn{3}{c|}{\hbox{Contributions on $\calA_1$}}&\calH^*(\calF_{2,3})
 \end{array}
\end{equation}
Here the first column shows that over $\ab[3]$ the only new local system is $\QQ$ in degree zero. Over $\ab[2]$ there are two new local systems, each isomorphic to $\QQ$, in degrees 2 and 4 respectively. The columns above $\ab[1]$ then indicate the contributions to the cohomology of the fibers over $\ab[1]$ of these three new local systems on $\ab[3]$ and $\ab[2]$, with each column giving the contributions of one new local system $\QQ$, Finally, the last column gives the local systems that form the cohomology of the fibration $\calF_{2,3}$.
From this we see that the top degree cohomology of the fiber $\calH^6(\calF_{2,3})=\QQ$ cannot be accounted for by the intersection homology of the links, and thus gives a new local system $\QQ[-1]$ on $\ab[1]$. By symmetry of new local systems around degree 5 provided by Proposition~\ref{prop:symmetry}, the only other new local system on $\ab[1]$ must then be $\QQ[1]$; following our previous convention, we have underlined these two new local systems on $\ab[1]$.
Summarizing the contributions from the new local systems that we have found so far, we thus get
\begin{equation}\label{eq:containgenus3}
\begin{aligned}
 H^*(\oab[3])\supseteq
  IH^*(\Sat[3])\oplus IH^*(\Sat[2])[-1]\oplus IH^*(\Sat[2])[1] \\
  \oplus IH^*(\Sat[1])[-1]\oplus IH^*(\Sat[1])[1].
  \end{aligned}
\end{equation}
All the cohomology above except for $IH^*(\Sat[3])$ is known. Indeed, the cohomology of $\oab[3]$ was computed by Hain~\cite{Ha}, while the intersection cohomology of $\Sat[2]$ we computed in the previous section. Denoting $r^j:=\dim R^j_3$ the dimension of the graded pieces of the tautological ring, which by Proposition~\ref{prop:tautological} is contained in $IH^*(\Sat[3])$, the above inclusion gives Table~\ref{equ:ih3}.

\begin{table*}[!hbtp]
\scalebox{1.00}{
$
\begin{array}{rl|rrrrrrrrrrrrr}
j& &\ 0 &\ 1&\ 2 &\ 3 &\ 4 &\ 5 &\ 6 &\ 7 &\ 8 &\ 9 & 10 & 11 & 12 \\
\hline 
\rule{0pt}{4mm}    h^j(\oab[3])&{} &1  &0 &2  &0  &4  &0  &6  & 0 & 4 &0 &2 &0 &1\\\hdashline 
\rule{0pt}{4mm}    &\multicolumn{12}{c}{\hbox{ must contain the direct sum of:}}\\\hdashline 
\rule{0pt}{4mm}    r^j(\Sat[3])& &1  &0 &1  &0  & 1  &0  &2  & 0 & 1 &0 & 1 &0 &1\\
ih^{j}(\Sat[2])&[-1] &  &  & 1  & 0 & 1 & 0  & 1 & 0 & 1 &    &        &  &\\
ih^{j}(\Sat[2])&\!\!\!\![1] &  &  &         &    & 1  &  0  & 1  & 0  &  1 & 0 & 1 &   &    \\
ih^{j}(\Sat[1])&\!\!\!\![-1] &  &  &         &    & 1  & 0  & 1 &   &   &    &        &  &\\
ih^{j}(\Sat[1])&\!\!\!\![1] &  &  &         &    &        &      & 1  & 0  & 1 &  & &   &    \\
\end{array}
$
}
 \caption{Decomposition theorem in genus $3$}\label{equ:ih3}
\end{table*}

Observe now that in Table~\ref{equ:ih3} the sum of the five bottom rows, i.e.~of the individual contributions to the cohomology of $\oab[3]$, is precisely equal to the top row. It thus follows that the containment in \eqref{eq:containgenus3} is in fact an equality. Furthermore, it follows that $IH^*(\Sat[3])=R^*(\Sat[3])$, proving the theorem.
\end{proof}
Combining \eqref{equ:contrA1} with the fact that the intersection cohomology of $\calN_{1,3}$ satisfies Poincar\'e duality in degree 9 gives all of $\mIH^*(\calN_{1,3})$ as stated in  Theorem~\ref{thm:links}.

Note that the above computations did not require us to compute the cohomology of $\calF_{0,3}$, as we could already ascertain that there can be no further new local systems with non-trivial cohomology and that \eqref{eq:containgenus3} is indeed an equality. However, for what follows in genus 4, and for the record, we would like to summarize what happens over $\ab[0]$. This allows us to compute the cohomology of $\calN_{0,3}$.

Indeed, the cohomology of $\calF_{0,3}$ was computed in~\cite[Proposition 2.4]{HT} --- and in particular, is zero in all odd degrees. The link $\calN_{0,3}$ contributes its intersection cohomology in degree up to 5; the link $\calN_{0,2}$ contributes its cohomology a priori in degree up to 2, but, as we proved in the previous section, its only non-zero cohomology is in degree zero. Note, however, that over $\ab[2]$ we have two new local systems, in degrees 2 and 4. Finally, the link $\calN_{0,1}$ only contributes its $\mIH^0(\calN_{0,1})=\QQ$, for the two new local systems in degrees 4 and 6. Writing down the contributions to $\calH^*(\calF_{0,3})$ from the new local systems on the bigger strata we thus get the following table:
$$
\scalebox{0.95}{$
 \begin{array}{r|r|r|r|r}
 6&&\mIH^2(\calN_{0,2})=0&\mIH^0(\calN_{0,1})=\QQ&\QQ\\
 4&\mIH^4(\calN_{0,3})&\mIH^0(\calN_{0,2})\oplus\mIH^2(\calN_{0,2})=\QQ&\mIH^0(\calN_{0,1})=\QQ&2\,\QQ\\
 2&\mIH^2(\calN_{0,3})&\mIH^0(\calN_{0,2})=\QQ&&\QQ\\
 0&\mIH^0(\calN_{0,3})&&&\QQ\\
 \hline
 &\mbox{from }\ab[3]&\mbox{from }\ab[2]&\mbox{from }\ab[1]& \calH^*(\calF_{0,3})
 \end{array}
 $
 }
$$
We have omitted to mention the cohomology in odd degree, as it always vanishes.
Since there are no new local systems supported on $\ab[0]$, all of the cohomology of the fiber must have been precisely accounted for by the intersection cohomology of the links. Combining this with Poincar\'e duality computes all of $\mIH^*(\calN_{0,3})$ as given in Theorem~\ref{thm:links}.

\section{Computations of intersection homology in genus 4}
In this section we use the decomposition theorem to compute all of the intersection cohomology of $\Sat[4]$ and of $\Perf[4]$, except in degree 10, and also to get lower bounds in degree 10. The machinery is an extension of that used in the previous sections, and uses the computation of the cohomology of all the fibers of the map $\Vor[4]\to\Sat[4]$ done in~\cite{HT2}, while many of the needed links have already been studied in the situations of lower genera.

\smallskip
We first deal with $\Perf[4]$. Recall that  the singular locus of $\Perf[4]$ consists of one point, and the map $\Vor[4]\to\Perf[4]$ is the blow-up of this singular point, with exceptional divisor $E\subset\Vor[4]$, see~\cite{EGH,HS}. Indeed, the exceptional divisor $E$ is smooth, but we will not need this. The intersection homology of the blow-up of a point is well-known, and easily
follows from our setup:
\begin{lm}\label{lm:Perf4}
$H^j(\Vor[4])\cong \begin{cases}
  IH^j(\Perf[4]), & \mbox{if } 19\le j\le 20 \\
  IH^j(\Perf[4]) \oplus H^j(E), & \mbox{if } 10\le j\le 18 \\
  IH^j(\Perf[4])\oplus H^{20-j}(E), & \mbox{if } 2\le j\le 9\\
  IH^j(\Perf[4]), & \mbox{if } 0\le j\le 1.
\end{cases}$
\end{lm}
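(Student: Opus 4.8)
The plan is to apply the decomposition theorem machinery of Section~\ref{sec:mechanics} directly to the blow-up map $\pi\colon\Vor[4]\to\Perf[4]$, which is birational with smooth source $\Vor[4]$ (of complex dimension $n=10$), and whose only nontrivial fiber is the exceptional divisor $E=\pi^{-1}(p)$ over the single singular point $p\in\Perf[4]$. Here the stratification is simply $\Perf[4]=S_1\sqcup S_0$ with $S_1=\Perf[4]\setminus\{p\}$ open (over which $\pi$ is an isomorphism) and $S_0=\{p\}$ a point, so $s_0=0$ and $\codim(S_0,\Perf[4])=10$. Thus the only ``new'' local systems in the decomposition \eqref{equ:decomp2} are constant systems (vector spaces) supported at $p$, coming from the cohomology $H^*(E)=\calH^*(\calF_0)$ of the fiber, and \eqref{equ:inclusion} reads
\begin{equation*}
H^m(\Vor[4])\cong IH^m(\Perf[4])\oplus\bigoplus_{i}H^{m-10+i}(\{p\},\mathcal L_{i,0,\beta}),
\end{equation*}
so everything reduces to bookkeeping which graded pieces of $H^*(E)$ are ``accounted for'' by the truncated intersection complex $\calI\calC_{\Perf[4]}$ restricted to $p$, versus which give new summands.

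First I would record that since $p$ is an isolated singular point and $\Perf[4]$ is unibranched there, the contribution of $\calI\calC_{\Perf[4]}$ at $p$ in degree $q$ is $\mIH^q(\calN_0)$ for $q\le n-s_0-1=9$ (with $\mIH^0(\calN_0)=\QQ$ by unibranchedness), and zero in degrees $\ge 10$ by the middle-perversity truncation; here $\calN_0$ is the link of $p$ in $\Perf[4]$, which — since $\Vor[4]\to\Perf[4]$ contracts $E$ to $p$ and $-E$-type classes are relatively ample — is (up to homotopy) the $S^1$-bundle over $E$ with Euler class the restriction of a relatively ample class, exactly as in the proof of Proposition~\ref{prop:homlink}. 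Next I would feed $H^*(E)=\calH^*(\calF_0)$ into the procedure: for $q\le 9$, $\mIH^q(\calN_0)$ is a summand of $H^q(E)$ and the remaining summand of $H^q(E)$ (together with all of $H^q(E)$ for $q\ge 10$) is new, appearing shifted by $[-i]$ with $-i=-q+n-s_0=-q+10$; the relative Hard Lefschetz symmetry (Proposition~\ref{prop:symmetry}) forces the new systems to be symmetric about the complex codimension $10$. Concretely, using the Leray/Gysin description of $\calN_0$ as an $S^1$-bundle over $E$ with an ample Euler class and the fact that $E$ has only even cohomology (being a quotient of projective space by a finite group — this is the one external input, available from \cite{EGH,HS}), one computes $\mIH^q(\calN_0)=H^q(E)$ for $q\le 9$ and gets that the new contributions at $p$ are precisely $H^{20-q}(E)$ appearing in degree $q$ for $q$ in the appropriate range, matching the blow-up formula. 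Assembling the three ingredients — $IH^m(\Perf[4])$ from the open stratum, the copy of the relevant $H^k(E)$ from $p$ — against \eqref{equ:inclusion} and separating the degree ranges $0\le m\le 1$, $2\le m\le 9$, $10\le m\le 18$, $19\le m\le 20$ (noting $\dim_\CC E=9$, so $H^j(E)=0$ for $j\ge 19$) yields exactly the four cases in the statement.

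Alternatively, and perhaps more cleanly, I would simply invoke the standard formula for intersection cohomology of a blow-up at a single point: if $X\to Y$ is the blow-up of a point $p$ on an $n$-fold $Y$ with $p$ the only singularity of $Y$ and exceptional divisor $E\cong$ a (weighted) projective space quotient, then $H^j(X)\cong IH^j(Y)\oplus\bigl(\bigoplus_{0<i<n}H^{2i}(E)\bigr)$-contributions distributed by Hard Lefschetz about the middle. In our situation $n=10$, so $IH^j(\Perf[4])=H^j(\Vor[4])$ for $j=19,20$ and $j=0,1$ (no room for exceptional contributions there since $H^{<2}(E)$ and $H^{>18}(E)$ are accounted for), while for $2\le j\le 9$ the new class sits in degree $j=2k$ contributed by $H^{2(10-k)}(E)=H^{20-j}(E)$, and for $10\le j\le 18$ by $H^{j}(E)$ itself; this is precisely the asserted case distinction. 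I would present it via the decomposition-theorem bookkeeping above to keep the paper self-contained and consistent with the rest of Section~\ref{sec:mechanics}.

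The main obstacle, such as it is, is not conceptual but a matter of pinning down the link $\calN_0$ and the truncation ranges carefully enough to be sure the middle degrees $j=9,10,11$ split the way claimed — i.e.\ that no new local system appears in degrees outside $[2,18]$ and that the Hard Lefschetz reflection axis is genuinely at complex codimension $10$; this hinges on $\calN_0$ being the circle bundle over $E$ with relatively ample Euler class, which follows from the ampleness of $-E$-type classes recorded in Section~\ref{sec:setup}, and on the parity vanishing $H^{\mathrm{odd}}(E)=0$, which is the only fact we borrow from the geometry of the exceptional divisor in \cite{EGH,HS}. Everything else is the routine degree-by-degree comparison already illustrated in the genus $2$ and genus $3$ computations.
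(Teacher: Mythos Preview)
Your core approach --- applying the decomposition theorem to $\pi:\Vor[4]\to\Perf[4]$ with the two-piece stratification and invoking Proposition~\ref{prop:symmetry} for the symmetry of new local systems around degree $10$ --- is exactly the paper's proof. However, you overcomplicate it and introduce two errors in the process.

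First, the claim $\mIH^q(\calN_0)=H^q(E)$ for $q\le 9$ is false. If the Euler class of the $S^1$-bundle is a Lefschetz class on $E$, the Leray spectral sequence gives $\mIH^q(\calN_0)$ as the \emph{primitive} part of $H^q(E)$ (the cokernel of cup product with the Euler class from $H^{q-2}(E)$), not all of $H^q(E)$. Indeed, if your claim held, the ``new'' part in each degree $q\le 9$ would be $H^q(E)/\mIH^q(\calN_0)=0$, contradicting the very symmetry you correctly invoke in the next sentence. Second, $E$ is not a quotient of projective space by a finite group; it is a $9$-dimensional (stack-smooth) toric variety whose Betti numbers are computed in~\cite{HT2} (with $b_{10}(E)=3$, for instance).

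The good news is that neither ingredient is needed. The paper's argument is purely formal: the link contributes to the fiber cohomology only in degrees $\le 9$ by middle-perversity truncation, so in degrees $\ge 10$ \emph{all} of $H^*(E)$ is new; Proposition~\ref{prop:symmetry} then forces the new piece in degree $j\le 9$ to be $H^{20-j}(E)$; and $\dim_\CC E=9$ kills contributions in degrees $0,1,19,20$. No knowledge of the link cohomology, the parity of $H^*(E)$, or the geometry of $E$ beyond its dimension is required. Your ``alternative'' paragraph is closest to this, and that is all you should keep.
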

\begin{proof}
In our setup, the stratification of $\Perf[4]$ consists of the singular point and its complement. On the complement we have the local system $\QQ$, and since the
complex codimension of the singular point is $10$, the link contributes to the fiber cohomology up to degree 9.
On the other hand, the new local systems that need to account for the cohomology of the fiber $H^*(E)$ must be symmetric around degree 10 by Proposition~\ref{prop:symmetry}. Since in degree 10 and higher all of $H^*(E)$ is new, the new local systems in degree $j\le 9$ must then be $H^{20-j}(E)$ by  symmetry. Of course, since $\dim_\CC E=9$, there is no contribution in degrees $0,1,19,20$.
\end{proof}
Using this and the computations of $H^*(\Vor[4])$ and $H^*(E)$, as done in ~\cite[Theorem 1]{HT2} and~\cite[Theorem 26(1)]{HT2}, we obtain all claims in Proposition~\ref{prop:Perf4}, apart from the claim in degree $10$. Further below we will see that $b_{10}(\Vor[4]) \geq 19 $, which, together with $b_{10}(E)=3$  then gives us $ib_{10}(\Perf[4]) \geq 16$, as claimed.

\smallskip
We now proceed to compute $IH^*(\Sat[4])$ by investigating the decomposition theorem for the morphism $\phi_4:\Vor[4]\to\Sat[4]$. As before, we first do the computation over $\calA_3\subset\Sat[4]$, where the relevant cohomology of the fibration $\calF_{3,4}$ is given by Proposition~\ref{prop:univfamily} and the (intersection) cohomology of the link $\calN_{3,4}$ is given by Proposition~\ref{prop:homlink}. Again, we have underlined the new local systems.
At this point we would like to remark that the degree $6$ entry is a priori $\VV^{\vee}_{11}$. However, all symplectic representations are self-dual, and since we are omitting all Hodge weights, we can simply write $\VV_{11}$.
$$
 \begin{array}{r|c|cl|cl}
 6&&&&\calH^6(\calF_{3,4})&=\underline{\QQ}\\
 5&&&&\calH^5(\calF_{3,4})&=0\\
 4&&&&\calH^4(\calF_{3,4})&=\underline{\QQ}\oplus\underline{\VV_{11}}\\
 3&&\mIH^3(\calN_{3,4})&=0&\calH^3(\calF_{3,4})&=0\\
 2&&\mIH^2(\calN_{3,4})&=\VV_{11}&\calH^2(\calF_{3,4})&=\underline{\QQ}\oplus\VV_{11}\\
 1&&\mIH^1(\calN_{3,4})&=0&\calH^1(\calF_{3,4})&=0\\
 0&\QQ&\mIH^0(\calN_{3,4})&=\QQ&\calH^0(\calF_{3,4})&=\QQ\\
 \hline
 &{\rm on\ }\calA_4&{\rm on\ }\calA_3&&\calH^*(\calF_{3,4})
 \end{array}
$$
\begin{rem}\label{rem:V11}
This is the first case where we have a non-trivial (not $\QQ$) new local system appearing in the decomposition theorem, namely  $\VV_{11}$,
It will follow from our computations, however, that $IH^k(\Sat[3],\VV_{11})=0$,  except possibly in the middle degree $k=6$ -- since all the rest of the intersection cohomology of $\Vor[4]$ will already be accounted for. We see no a priori reason to expect this.
\end{rem}
We now proceed to work over $\ab[2]\subset\Sat[4]$, where the cohomology of the fiber $\calF_{2,4}$ is given in the left-most column in~\cite[Table 8]{HT2} (where it appears as $R_!^q(k_2\circ k_3)_*(\QQ)$).
We note that while at this point we do not know the intersection cohomology $\mIH^*(\calN_{2,4})$, we already know from Proposition~\ref{prop:homlink}
that $\mIH^0(\calN_{2,3})=\QQ$, $\mIH^1(\calN_{2,3})=0$ and $\mIH^2(\calN_{2,3})=\VV_{11}$.
Substituting these, and denoting $*^i:=\mIH^i(\calN_{2,3},\VV_{11})$, the computation can be summarized as follows:
\begin{equation}\label{equ:contribA2}
 \begin{array}{r|r|r|l|r}
 10&&&&\underline{\QQ}\\
 9&&&&0\\
 8&&&\VV_{11}&\VV_{11}\oplus\underline{2\,\QQ}\\
 7&&&&0\\
 6&&\QQ&\mIH^6(\calN_{2,4})\oplus*^2\oplus\VV_{11}\oplus\QQ&\VV_{22}\oplus\VV_{11}\oplus\QQ\oplus\underline{2\, \QQ}\\
 5&&&\mIH^5(\calN_{2,4})\oplus*^1&\VV_2\\
 4&&\QQ\oplus\VV_{11}&\mIH^4(\calN_{2,4})\oplus\VV_{11}\oplus\QQ\oplus *^0 &\VV_{22}\oplus\VV_{11}\oplus\QQ\oplus\underline{\QQ}\\
 3&&&\mIH^3(\calN_{2,4})&0\\
 2&&\QQ&\mIH^2(\calN_{2,4})\oplus\QQ&\QQ\\
 1&&&\mIH^1(\calN_{2,4})&0\\
 0&\QQ&&\mIH^0(\calN_{2,4})&\QQ\\
 \hline
 &\calA_4&\calA_3&\mbox{All contributions over }\calA_2&\calH^*(\calF_{2,4})
 \end{array}
\end{equation}
Since the new local systems are symmetric around degree 7 by Proposition~\ref{prop:symmetry}, the only piece of information about intersection cohomology of the links that we actually use to determine all new local systems  is that $\mIH^2(\calN_{2,3})=\VV_{11}$.
As a result, the new local systems supported on $\ab[2]$ are $\QQ[-3]\oplus 2\,\QQ[-1]\oplus2\,\QQ[1]\oplus \QQ[3]$.
Moreover, we can relate the cohomology of $\calN_{2,4}$ and the cohomology of $\calN_{2,3}$ with coefficients in $\VV_{11}$, as stated in Theorem~\ref{thm:links}.

Proceeding to work over $\ab[1]\subset\Sat[4]$, we need to know the cohomology of the fiber $\calF_{1,4}$.
In principle, all the necessary computations were done in~\cite[Sec.~6]{HT2}. There is, however, one difference. In ~\cite{HT2} the Gysin sequence was applied to the strata $\beta(\sigma)$ themselves,
where $\sigma$ runs through all $\Sp(6,\ZZ)$-orbits of cones in $\Sym^2_{\geq 0}(\RR^3)$ containing matrices of full rank. Here, in contrast, we want to apply the Gysin sequence fiberwise  to
the fibrations $\beta(\sigma) \to \ab[1]$.  Nevertheless, we will see that all required information has already been obtained in~\cite{HT2}, and it allows us to prove the following proposition.
\begin{prop}\label{prop:HF14}
The cohomology groups of the fibration $\calF_{1,4}$, as local systems on $\ab[1]$, vanish in every  odd degree, while the even degree ones are as follows:
\begin{equation} \label{equ:betti14}
\begin{array}{r|ccccccc}
j&0&2&4&6&8&10&12\\\hline
\rule{0pt}{4mm}    \calH^j(\calF_{1,4})&\QQ&\QQ&\VV_2\oplus 2\QQ&\VV_2\oplus 4\QQ&4\QQ&3\QQ&\QQ\\
\end{array}
\end{equation}
\end{prop}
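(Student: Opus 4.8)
The plan is to compute $\calH^*(\calF_{1,4})$ directly from the geometry of the stratified fibration $\calF_{1,4}\to\ab[1]$ (the restriction of $\phi_4:\Vor[4]\to\Sat[4]$, which becomes a genuine fibration on level covers, cf.\ Section~\ref{sec:level}), importing the bulk of the work from~\cite[Sec.~6]{HT2} and re-reading it so as to retain the local-system structure over $\ab[1]$ instead of passing to cohomology of $\ab[1]$ at the very end. On a level-$n$ cover ($n\ge 3$) the space $\calF_{1,4}(n)=\beta_3^{\Sigma,0}(n)$ is stratified by the smooth locally closed pieces $\beta(\sigma)(n)$, where $\sigma$ runs over the (level-$n$) $\GL(3,\ZZ)$-orbits of cones of $\Sigma$ in $\Sym^2_{\ge 0}(\RR^3)$ containing positive definite matrices; these occur in dimensions $3,4,5,6$, and $\beta(\sigma)(n)\to\ab[1](n)$ is a torus bundle of rank $6-\dim\sigma$ over the $3$-fold fibre product $\calX_1^{\times 3}(n)\to\ab[1](n)$. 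The first step is to record, for each $\sigma$, the fibrewise cohomology with compact supports $R^*_c\big(\beta(\sigma)(n)\to\ab[1](n)\big)$ as local systems on $\ab[1](n)$: the $\calX_1^{\times 3}\to\ab[1]$ factor is governed by Proposition~\ref{prop:univfamily} with $g=1$ together with K\"unneth, and the torus-bundle factor by the explicit monodromy description in~\cite[Section~7]{GHT}. Taking $K_{4,3}(n)$-invariants (exact, since the group is finite and we work over $\QQ$) and using the residual $\Sp(2,\ZZ)$-equivariance, these descend to local systems on $\ab[1]$.

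The second step is to assemble these via the Gysin (residue) spectral sequence of the stratification $\{\beta(\sigma)(n)\}$, applied fibrewise over $\ab[1]$: its $E_1$-page is built from the $R^*_c$ just recorded, and its abutment, after $K_{4,3}(n)$-invariants, is $\calH^*(\calF_{1,4})$. The key point — and the reason no conceptually new work is needed — is that this is the same spectral sequence whose computation is carried out in~\cite[Sec.~6]{HT2}, the only difference being that there the local systems are finally evaluated on $\ab[1]$, whereas here they are kept. Consequently every differential is as in loc.\ cit., and one simply reads off the surviving local systems degree by degree, obtaining Table~\eqref{equ:betti14}.

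Two checks finish the proof. The vanishing in odd degrees is forced, exactly as in the genus $2$ and $3$ arguments, by the involution of the fibre induced by $-1$, which acts on $H^q$ by $(-1)^q$; equivalently, for all the spaces in sight cohomological degree and weight have the same parity, so only even-weight $\Sp(2,\ZZ)$-local systems — the $\QQ$'s and the $\VV_2$, but not $\VV_1$ — can survive. And the Leray spectral sequence of $\calF_{1,4}\to\ab[1]$, which degenerates at $E_2$ because $\ab[1]$ has cohomological dimension $1$, fed with the claimed local systems and with $H^0(\ab[1],\QQ)=\QQ$, $H^1(\ab[1],\QQ)=0$, $H^0(\ab[1],\VV_2)=0$, and $\dim H^1(\ab[1],\VV_2)=1$ (the Eisenstein class, there being no cusp forms of weight $4$), must reproduce the cohomology of $\beta_3^{\Sigma,0}$ found in~\cite{HT2}.

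The main obstacle is organisational rather than conceptual: one must enumerate the cones $\sigma$ and their torus monodromies, and then track through the Gysin spectral sequence precisely which irreducible $\Sp(2,\ZZ)$-local systems sit inside each $R^*_c\big(\beta(\sigma)\to\ab[1]\big)$ and how the differentials act on them, matching this against~\cite[Sec.~6]{HT2} line by line. A secondary subtlety is that $\calF_{1,4}$ is not smooth — its fibre $F_{1,4}$ is a singular compact $6$-fold — so Poincar\'e duality of the fibre is unavailable as a shortcut, and the only symmetry at hand, relative Hard Lefschetz in the form of Proposition~\ref{prop:symmetry}, constrains just the \emph{new} local systems and hence is used in the subsequent decomposition-theorem step rather than in the proof of this Proposition itself.
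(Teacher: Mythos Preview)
Your proposal is correct and follows essentially the same approach as the paper: stratify $\calF_{1,4}$ by the five toroidal strata $\beta(\sigma)$ over $\ab[1]$ (cones of dimensions $3,4,5,6$), read off their fibrewise compactly supported cohomology as local systems from the computations in~\cite[Sec.~6]{HT2}, and feed these into a Gysin spectral sequence of local systems.

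One point deserves to be made precise. You write that ``every differential is as in loc.\ cit.'', but the spectral sequence in~\cite{HT2} is for the total space $\beta_3^0$, not fibrewise, so its differentials are maps of cohomology groups on $\ab[1]$ rather than maps of local systems. The paper handles this by observing that the \emph{only} potentially non-trivial differential in the fibrewise Gysin sequence is $d_1^{2,5}:\VV_2(-2)\to\VV_2(-2)$, and then argues that this must be an isomorphism: by Schur's lemma it is either zero or an isomorphism, and if it were zero the fibre $F_{1,4}$ would have nonzero $H^7$, contradicting the computation on~\cite[p.~230]{HT2} (the differential $d_7$ there). Your consistency check via $H^1(\ab[1],\VV_2)\ne 0$ would also detect this, but you should make the Schur-lemma step explicit rather than asserting that the differentials transfer directly.
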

\begin{proof}
We recall that the stratum $\beta_3^0$ of the map $\phi_4: \overline{\calA}_4 \to \Sat[4]$ over $\ab[1]$ consists of five strata $\beta(\sigma)$ of the toroidal compactification, indexed by the cones $\sigma^{(3)},\sigma_I^{(4)},\sigma_{II}^{(4)},\sigma^{(5)},\sigma^{(6)}$ in~\cite{HT2}, which were relabeled in a more
systematic manner $\sigma_{K_3}$, $\sigma_{K_3+1}$, $\sigma_{C_4}$, $\sigma_{K_4-1}$ and $\sigma_{K_4}$ in~\cite{GHT}.
For each such cone $\sigma$ we denote, as in~\cite{HT2}, the fiber of $\beta(\sigma) \to \ab[1]$ by $F^{(3)},F_I^{(4)},F_{II}^{(4)},F^{(5)}$ and $F^{(6)}$ respectively, which have
dimension  $6,5,5,4,$ and $3$, and we will denote by $\calF$ the corresponding fibrations.
We list the cohomology with compact support for the various fibrations in~\eqref{equ:cohfiber3} below. The entries   for $\calH^*_c\calF^{(3)}$ and $\calH^*_c\calF_I^{(4)}$ are given in~\cite[Lemma 17]{HT2} and~\cite[Lemma 19]{HT2} respectively.
The cohomology with compact support of the total spaces of the other three strata over $\ab[1]$ was computed in~\cite[Lemmas~21,22,23]{HT2}, respectively. The proof of each of these lemmas proceeds by first computing the cohomology with compact support of the corresponding fibration $\calF$, as local systems over $\ab[1]$, and then computing the cohomology with compact support of these local systems on $\ab[1]$. For each of these fibrations, all the local systems on $\ab[1]$ that appear in the cohomology with compact support are equal to $\QQ$, and its only non-trivial cohomology with compact support is $\calH^2_c(\ab[1],\QQ)=\QQ$. Thus the cohomology of these fibers in any degree $ $is equal to the cohomology of the total spaces of the corresponding strata in degree $d+2$.
This immediately gives the values in (\ref{equ:cohfiber3}),
where we have additionally included the respective Hodge weights. This serves as a check because we will have to investigate a certain differential in the Gysin spectral sequence.

\begin{equation}
 \begin{array}{r|rrrrr}
 12&\QQ(-6)\\
 11\\
 10&\QQ(-5)&\QQ(-5)&\QQ(-5)\\
 9\\
 8&\VV_2(-2)&2\QQ(-4)&\QQ(-4)&\QQ(-4)\\
 7&&\VV_2(-2)\\
 6&\VV_2(-1)&\QQ(-3)&&2\QQ(-3)&\QQ(-3)\\
 5\\
 4&&\VV_2(0)\ &&\QQ(-2)&\QQ(-2)\\
 3\\
 2&&&&&\QQ(-1)\\
 1\\
 0&&&&&\QQ(0)\ \\
 \hline
 \rule{0pt}{4mm}    &\calH^*_c(\calF^{(3)})& \calH^*_c(\calF_I^{(4)})&\calH^*_c(\calF_{II}^{(4)})&\calH^*_c(\calF^{(5)})&\calH^*_c(\calF^{(6)})
 \end{array}
 \label{equ:cohfiber3}
\end{equation}
We now compute the cohomology with compact support of the fibration $\calF_{1,4}$, which is the union of the 5 pieces above. To do this, we use a Gysin spectral sequence (of local systems)
and apply this to the stratification of $\calF_{1.4}$ given by $\calF^{(6)}$, $\calF_I^{(5)}$, $\calF_I^{(4)} \cup \calF_{II}^{(4)}$ and $\calF^{(3)}$, all considered as fibrations over $\ab[1]$, so that the relative cohomology forms local systems on $\ab[1]$.
Its $E_1$ page is given by (\ref{tab:Gysin}) --- compare to~\cite[Table~13]{HT2} for the Gysin spectral sequence for $H^*_c(\beta_3^0)$:

\begin{equation}
 \begin{array}{r|ccccl}
 q\\
 9&&&&\QQ(-6)\\
 8&&&2\QQ(-5)&\\
 7&&\QQ(-4)&&\QQ(-5)\\
 6&\QQ(-3)&&3\QQ(-4)&\\
 5&&2\QQ(-3)&\VV_2(-2)&\VV_2(-2)\\
 4&\QQ(-2)&&\QQ(-3)&\\
 3&&\QQ(-2)&&\VV_2(-1)\\
 2&\QQ(-1)&&\VV_2(0)&\\
 1&&&&\\
 0&\QQ(0)&&&\\
 \hline
 &0&1&2&3&p\\
 &\calH^*_c(\calF^{(6)})& \calH^*_c(\calF^{(5)})   &\calH^*_c(\calF_I^{(4)}) \oplus \calH^*_c(\calF_{II}^{(4)})&\calH^*_c(\calF^{(3)})
 \end{array}
 \label{tab:Gysin}
\end{equation}

The only possibly non-trivial differential in the Gysin spectral sequence for $\calH^*(\calF_{1,4})$ is thus $d_1^{2,5}:\VV_2(-2)\to\VV_2(-2)$.
We claim that this is non-trivial and hence an isomorphism.  Indeed,  this is equivalent to saying that the fiber $F_{1,4}$ has no cohomology in degree $7$, or equivalently
that the differential $d_7$
in the proof of ~\cite[Prop.~24]{HT2} is an isomorphism, which was shown to be the case on ~\cite[p.~230]{HT2}.
Thus the total cohomology with compact support of the fibration $\calF_{1,4}$ is the sum of all the other entries, besides the two copies of $\VV_2(-2)$, in the Gysin spectral sequence above. As the fiber
$F_{1,4}$ is compact, its cohomology with compact support is equal to its cohomology, and is precisely as stated in the proposition.
\end{proof}
We now return to implementing our method to find the new local systems supported on $\ab[1]$ for the decomposition theorem for the map $\phi_4:\Vor[4]\to\Sat[4]$.
Since all the odd degree cohomology of $\calF_{1,4}$ is zero, to simplify the tables we only write down the even degree contributions from all local systems.
Further using the fact that the only non-zero intersection homology $\mIH^j(\calN_{1,2})$ and $\mIH^j(\calN_{1,3})$
are in degree zero, by Proposition~\ref{prop:homlink} and by the part of Theorem~\ref{thm:links} on $\calN_{1,3}$ that we already proved in the section with genus 3 computations, we obtain
\begin{equation*}
\scalebox{0.88}{
$
 \begin{array}{r|r|r|r|l|r}
 12&&&&&\underline{\QQ}\\
 10&&&\QQ&\QQ&\QQ\oplus\underline{2\,\QQ}\\
 8&&&2\,\QQ&\mIH^8(\calN_{1,4})\oplus\mIH^4(\calN_{1,3},\VV_{11})\oplus 2\,\QQ&2\,\QQ\oplus\underline{2\,\QQ}\\
 6&&\QQ&2\,\QQ&\mIH^6(\calN_{1,4})\oplus\mIH^2(\calN_{1,3},\VV_{11})\oplus 3\,\QQ&\VV_2\oplus 3\,\QQ\oplus\underline{\QQ}\\
 4&&\VV_{11}\oplus\QQ &\QQ &\mIH^4(\calN_{1,4})\oplus\mIH^0(\calN_{1,3},\VV_{11})\oplus2\,\QQ&\VV_{2}\oplus 2\,\QQ\\
 2&&\QQ&&\mIH^2(\calN_{1,4})\oplus\QQ&\, \ \QQ\\
 0&\QQ&&&\mIH^0(\calN_{1,4})&\QQ\\
 \hline
 &\calA_4&\calA_3&\calA_2&\mbox{Total of prior contributions over }\ab[1]& \calH^*(\calF_{1,4})
 \end{array}
 $
 }
\end{equation*}

By Proposition~\ref{prop:symmetry} the new local systems are symmetric around degree 9, and thus they must be
$\QQ[-3]\oplus 2\,\QQ[-1]\oplus 2\,\QQ[1]\oplus\QQ[3]$. As before, matching the contributions over $\ab[1]$ from the new local systems on bigger strata to the local systems in the cohomology of $\calF_{1,4}$ which are not new, allows us to deduce the results on $\mIH^*(\calN_{1,4},\QQ)$ and $\mIH^*(\calN_{1,3},\VV_{11})$ as stated in Theorem~\ref{thm:links}.

We now need to deal with the fibration $\calF_{0,4}$ over $\calA_0$ (which is simply the fiber $F_{0,4}$ over one point). This is the union of $E$ (the exceptional divisor of the contraction $\overline\ab[4]\to\Perf[4]$) and the strict transform of the fiber of the map $\Perf[4]\to\Sat[4]$ over $\ab[0]$ ,
where the latter was denoted $\beta_4^{\op{Perf}}$ in~\cite{HT2}.
The cohomology of the fiber  $F_{0,4}$  is given by~\cite[Theorem 26 (2)]{HT2}, where this fiber was denoted $\beta_4$: the only non-zero Betti numbers are the even degree ones, and they are equal to
\begin{equation} \label{equ:F04}
\begin{array}{r|cccccccccc}
j&0&2&4&6&8&10&12&14&16&18\\\hline
b_j(F_{0,4})&1&2&3&7&7&6&4&2&1&1
\end{array}
\end{equation}
Similarly to the proof of Lemma~\ref{lm:Perf4} above, the new local systems appearing must be symmetric around degree 10 by Proposition~\ref{prop:symmetry}. We thus first focus on understanding what part of the cohomology of $F_{0,4}$ in degree 10 and higher can be accounted for by local systems on previous strata, and then use the symmetry to compute some intersection cohomology of the relevant links.

We have already proven the relevant statements in Theorem~\ref{thm:links}, that the only non-trivial intersection cohomology of $\calN_{0,2}$ and $\calN_{0,3}$ with $\QQ$ coefficients are in degree zero.

The contributions towards $\calH^*(F_{0,4})$ from the new local systems over the other strata are summarized in the following table --- where we omit all the odd degrees since the cohomology of $F_{0,4}$ is then zero.
Again, we can easily find the new (underlined)  local systems in degree 10 and higher; by the symmetry given by Proposition~\ref{prop:symmetry} this also gives us the new local systems in lower degrees.
$$
 \begin{array}{r|r|r|r|r|r}
 18&&&&&\underline{\QQ}\\
 16&&&&&\underline{\QQ}\\
 14&&&&&\underline{2\,\QQ}\\
 12&&&&\QQ&\QQ\oplus\underline{3\,\QQ}\\
 10&&&\QQ&2\,\QQ&3\,\QQ\oplus\underline{3\,\QQ}\\
 8&\mIH^8(\calN_{0,4})&\mIH^4(\calN_{0,3},\VV_{11})\qquad\qquad &2\,\QQ&2\,\QQ&4\,\QQ\oplus\underline{3\,\QQ}\\
 6&\mIH^6(\calN_{0,4})&\mIH^2(\calN_{0,3},\VV_{11})\oplus\QQ&2\,\QQ&\QQ&5\,\QQ\oplus\underline{2\,\QQ}\\
 4&\mIH^4(\calN_{0,4})&\mIH^0(\calN_{0,3},\VV_{11})\oplus\QQ&\QQ&&2\,\QQ\oplus\underline{\QQ}\\
 2&\mIH^2(\calN_{0,4})&\QQ&&&\QQ\oplus\underline{\QQ}\\
 0&\mIH^0(\calN_{0,4})&&&&\QQ\\
 \hline
 \rule{0pt}{4mm}    &\mbox{from }\ab[4]&\mbox{from }\ab[3]&\mbox{from }\ab[2]&\mbox{from }\ab[1]& \calH^*(F_{0,4})
 \end{array}
$$

Since all the cohomology $\calH^*(F_{0,4})$ must be accounted for by the new local systems and contributions from bigger strata, we get the statements about the cohomology of the link $\calN_{0,4}$ as given in Theorem~\ref{thm:links}

Assembling the contributions we have accounted for, we have
$$
 \begin{aligned}H^*&(\Vor[4])= IH^*(\Sat[4])\!
 \oplus\! IH^*(\Sat[3])[-2]\!\oplus\! IH^*(\Sat[3])[0]\!\oplus IH^*(\Sat[3])[2]\\ \oplus & IH^*(\Sat[2])[-3]\oplus 2\, IH^*(\Sat[2])[-1]\oplus 2\,IH^*(\Sat[2])[1]\oplus IH^*(\Sat[2])[3]\\
 \oplus  &IH^*(\Sat[1])[-3]\oplus 2\, IH^*(\Sat[1])[-1]\oplus 2\, IH^*(\Sat[1])[1]\oplus IH^* (\Sat[1])[3]\\ \oplus &(\mbox{new part of }\calH^*(F_{0,4}), \mbox{ symmetrically})[0]\oplus IH^*(\Sat[3],\VV_{11})[0].
 \end{aligned}
$$
We have no  a priori information about the last summand, but from the computation below we will conclude from this that its only possibly non-zero intersection cohomology group is $IH^6(\Sat[3],\VV_{11})$.
Otherwise, all the intersection cohomology on the right is known from the previous sections, except for
$IH^*(\Sat[4])$, which it is our aim to determine. Our computations are summarized in Table~\ref{equ:ih4},
where we have again only indicated the even degrees, as all odd degree intersection cohomology groups involved are zero.

\begin{table*}[!hbtp]
$\begin{array}{rl|rrrrrrrrrrr}
&j &\ 0 &\ 2&\ 4 &\ 6 &\ 8 & 10 & 12 & 14 &16 & 18 & 20 \\
\hline 
\rule{0pt}{4mm}     ih^j(\Vor[4])&&1  &3 &5  &11  &17  &?  &17  & 11 & 5 &3 &1\\\hdashline
\rule{0pt}{4mm}      &\multicolumn{10}{c}{\hbox{ must contain the direct sum of}}\\\hdashline 
\rule{0pt}{4mm}       
r^j(\Sat[4])&&1  &1 &1  &2  & 2  &2  &2  & 2 & 1 &1 & 1\\
ih^{j}(\Sat[3])&\!\!\!\![-2]&&1&1&1&2&1&1&1\\
ih^{j}(\Sat[3])&\!\!\!\![0]&&&1&1&1&2&1&1&1\\
ih^{j}(\Sat[3])&\!\!\!\![2]&&&&1&1&1&2&1&1&1\\
ih^{j}(\Sat[2])&\!\!\!\![-3]&&&1&1&1&1\\
2\,ih^{j}(\Sat[2])&\!\!\!\![-1]&&&&2&2&2&2\\
2\,ih^{j}(\Sat[2])&\!\!\!\![1]&&&&&2&2&2&2\\
ih^{j}(\Sat[2])&\!\!\!\![3]&&&&&&1&1&1&1\\
ih^{j}(\Sat[1])&\!\!\!\![-3]&&&&1&1 \\
2\,ih^{j}(\Sat[1])&\!\!\!\![-1]&&&&&2&2\\
2\,ih^{j}(\Sat[1])&\!\!\!\![1]&&&&&&2&2\\
ih^{j}(\Sat[1])&\!\!\!\![3] &&&&&&&1&1\\
\mbox{From }\calH^*(F_{0,4})&&&1&1&2&3&3&3&2&1&1&\\
\hdashline
\rule{0pt}{4mm}     
{\rm Sum\ equals}&&1&3&5&11&17&19&17&11&5&3&1
\end{array}$
 \caption{Decomposition theorem in genus $4$}\label{equ:ih4}
\end{table*}

As a result, we see that in every degree except possibly 10 the summands above account for all of the intersection cohomology of $\Vor[4]$, and thus we must have $IH^*(\Sat[4])=R^*(\Sat[4])$, except possibly for degree 10. This finally yields our main result in genus 4, as well as the inequality $h^{10}(\Vor[4])\ge 19$, which by Lemma~\ref{lm:Perf4} also gives $ih^{10}(\Perf[4])\ge 16$, thus  finishing the proof of Proposition~\ref{prop:Perf4}. To summarize, we have proven

\begin{thm}\label{theo:IH4}
The intersection cohomology of $\Sat[4]$ is equal to the tautological ring of $\Sat[4]$, except it may possibly be bigger in degree 10. That is, we have for the even degree intersection Betti numbers:
\begin{equation} \label{equ:ibetti4}
\begin{array}{r|ccccccccccc}
j&     0&2&4&6&8&10&12&14&16&18&20\\\hline
\rule{0pt}{4mm}      ib_j(\Sat[4])&1&1&1&2&2&\ge 2&2&2&1&1&1
\end{array}
\end{equation}
while all the odd intersection homology groups of $\Sat[4]$ are zero.
\end{thm}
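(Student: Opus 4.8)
The plan is to run the decomposition theorem for the contraction $\phi_4\colon\Vor[4]\to\Sat[4]$, in the form set up in Sections~\ref{sec:mechanics}--\ref{sec:firstlink}. Since $\Vor[4]$ is a smooth projective stack, $H^*(\Vor[4])\cong IH^*(\Vor[4])$ and \eqref{equ:inclusion} presents $IH^*(\Sat[4])$ as the $S_\ell$-summand of $H^*(\Vor[4])$, the remaining summands being shifted copies of $IH^*(\Sat[k],\calL)$ for the new local systems $\calL$ supported on the strata $\ab[k]\subset\Sat[4]$, $k\le 3$. The ingredients I would assemble first are: the Betti numbers of $\Vor[4]$ and of the exceptional divisor $E$ from~\cite{HT2} (these give the top row of the eventual table, and, via Lemma~\ref{lm:Perf4}, the bridge to $IH^*(\Perf[4])$); the already-computed $IH^*(\Sat[k])=R_k$ for $k\le 3$ from Proposition~\ref{prop:IH2} and Theorem~\ref{theo:IH3}; the fibrewise cohomology of the fibrations $\calF_{k,4}\to\ab[k]$, which for $k=3$ is Proposition~\ref{prop:univfamily} and for $k=2,0$ is read off from~\cite[Table~8, Theorem~26]{HT2}; and the intersection cohomology of the links $\calN_{k,m}$ below the middle dimension, which for $\calN_{g-1,g}$ is Proposition~\ref{prop:homlink} and for $\calN_{1,3},\calN_{0,3}$ was obtained along the way in the genus~$3$ section.

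Next I would process the strata in order of decreasing dimension, $\ab[3],\ab[2],\ab[1],\ab[0]$, and at each one carry out the bookkeeping of Section~\ref{sec:mechanics}: from $\calH^*(\calF_{k,4})$ subtract the contributions already forced by the intersection complex of $\Sat[4]$ and by the new local systems found on the larger strata; whatever remains is the collection of new local systems on $\ab[k]$. The symmetry Proposition~\ref{prop:symmetry} around the codimension $\codim(\ab[k],\Sat[4])=n-s_k$ is what makes this feasible: it means one only needs the links below their middle dimension, it lets one read off the new local systems on $\ab[k]$ from the top half of $\calH^*(\calF_{k,4})$ alone, and it then lets one run the symmetry backwards to compute the remaining $\mIH$-groups of the links (which is exactly how the identities in Theorem~\ref{thm:links} relating $\mIH^*(\calN_{i,4},\QQ)$ to $\mIH^*(\calN_{i-1,3},\VV_{11})$ drop out). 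Over $\ab[3]$ this should produce three copies of $\QQ$ together with one copy of $\VV_{11}$ (the first genuinely non-constant new system, cf.\ Remark~\ref{rem:V11}); over $\ab[2]$ and over $\ab[1]$ it should produce $\QQ[-3]\oplus 2\,\QQ[-1]\oplus 2\,\QQ[1]\oplus\QQ[3]$ each; over $\ab[0]$ it should produce the ``new part of $H^*(F_{0,4})$, placed symmetrically about degree~$10$''.

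The $\ab[1]$-step is the one I expect to cost the most work, because~\cite{HT2} computed the cohomology of the total spaces $\beta(\sigma)$ rather than of the fibrations $\beta(\sigma)\to\ab[1]$; this is the content of Proposition~\ref{prop:HF14}. I would re-run the relevant Gysin spectral sequence fibrewise, using that every local system occurring in $\calH^*_c$ of the five pieces $\calF^{(3)},\calF_I^{(4)},\calF_{II}^{(4)},\calF^{(5)},\calF^{(6)}$ is trivial and that $H^*_c(\ab[1])$ is concentrated in degree $2$, so the fibrewise answer is the total-space answer shifted by~$2$. The only point genuinely needing an argument is that the one possibly non-trivial differential, $d_1^{2,5}\colon\VV_2(-2)\to\VV_2(-2)$, is an isomorphism; this is equivalent to $H^7(F_{1,4})=0$, which is the isomorphism-of-$d_7$ statement already established on~\cite[p.~230]{HT2}. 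With $\calH^*(\calF_{1,4})$ in hand, the rest of the $\ab[1]$ bookkeeping is routine.

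Finally I would assemble all the contributions into a single table, modelled on Table~\ref{equ:ih3}, whose top row is $h^j(\Vor[4])$ and whose lower rows are $R^j_4$ together with all the shifted copies of $IH^*(\Sat[k])$, $k\le 3$, and the new part of $H^*(F_{0,4})$. In every even degree except~$10$ the lower rows already sum to $h^j(\Vor[4])$; since $R^j_4\hookrightarrow IH^j(\Sat[4])$ by Proposition~\ref{prop:tautological} and $IH^j(\Sat[4])$ is a summand of $H^j(\Vor[4])$ with the remaining (a priori possibly larger) summand $IH^*(\Sat[3],\VV_{11})$ entering only in degree~$10$, all the inequalities must be equalities, forcing $IH^j(\Sat[4])=R^j_4$ and simultaneously $IH^k(\Sat[3],\VV_{11})=0$ for $k\ne 6$; all odd degrees vanish because every cohomology group entering the computation does. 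The honest obstacle to a complete answer is degree~$10$: there $h^{10}(\Vor[4])$ is unknown, as it involves the still-unknown Euler number $e(\ab[4])$, and the two summands $IH^{10}(\Sat[4])$ and $IH^6(\Sat[3],\VV_{11})$ cannot be separated by this method. One can therefore only conclude the inequality: adding up the known summands gives $h^{10}(\Vor[4])\ge 19$, hence $ih^{10}(\Perf[4])\ge 16$ by Lemma~\ref{lm:Perf4} together with $b_{10}(E)=3$, and $ib_{10}(\Sat[4])\ge\dim R^{10}_4=2$, which is the stated result.
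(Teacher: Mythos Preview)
Your proposal is correct and follows essentially the same route as the paper: you run the decomposition theorem for $\phi_4\colon\Vor[4]\to\Sat[4]$ stratum by stratum, using exactly the same inputs from~\cite{HT2}, the same fibrewise Gysin argument for $\calH^*(\calF_{1,4})$ (including the identification of the lone differential with the $d_7$ of~\cite[p.~230]{HT2}), the same list of new local systems on each $\ab[k]$, and the same final comparison table forcing $IH^j(\Sat[4])=R^j_4$ for $j\ne 10$ and $IH^k(\Sat[3],\VV_{11})=0$ for $k\ne 6$. The only phrasing to tighten is that $IH^*(\Sat[3],\VV_{11})[0]$ a priori contributes in degrees $4$ through $16$, and it is the equality of row-sums away from degree~$10$ that \emph{forces} all but $IH^6(\Sat[3],\VV_{11})$ to vanish---not the other way around.
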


\begin{rem}\label{rem: noA0}
In our computations for $g\le 4$ we have found that there are no local systems supported on $\ab[0]$ contributing to the decomposition theorem. We expect this
to be true for any genus $g$.
\end{rem}

\end{document}